\newtheorem{proposition}{Proposition}
\newtheorem{theorem}{Theorem}
\newtheorem{corollary}{Corollary}
\def\R{\mathbb R}
\title{Integral Geometry  about the visual angle of a convex set}
\author{Juli\`a Cuf\'{\i},  Eduardo Gallego,   Agust\'{\i} Revent\'os
\\*[5pt]
{\small Departament de Matem\`{a}tiques}\\ 
{\small Universitat Aut\`{o}noma de Barcelona}\\
{\small 08193 Bellaterra, Barcelona, Catalonia}\\
{\small E-mail: jcufi@mat.uab.cat, egallego@mat.uab.cat, agusti@mat.uab.cat}
}
\date{}
\begin{document} 

\maketitle 

\let\thefootnote\relax
\footnotetext{The authors were  partially supported by grants 2017SGR358, 
2017SGR1725 (Generalitat de Catalunya) and PGC2018-095998-B-100 (Ministerio de Econom\'{\i}a y Competitividad).}

\begin{abstract}
In this paper we deal with a general type of integral formulas  of the visual angle, among them those  of Crofton, Hurwitz and  Masotti,   from the point of view of Integral Geometry. The purpose is twofold: to provide an interpretation of these formulas in terms of integrals of densities  
with respect to the canonical measure in the space of pairs of lines and to give new simpler proofs of them.  \\*[5pt]
{\bf Keywords:} Convex set, Visual angle,  Densities, Invariant measures.\\*[3pt]
{\bf Mathematics Subject Classification (2010):} 52A10, 53A04.
\end{abstract}

\section{Introduction}
Throughout this paper $K$ will be a compact convex set in $\R^{2}$ with boundary of class ${\mathcal C}^{2}$. We will denote by $F$ the area of $K$ and by $L$ the length of its boundary. 
 
In  1868 Crofton showed (\cite{crofton}), using arguments that nowadays belong to  Integral Geometry, the well known formula
\begin{equation}\label{21maig-2}
2\int_{P\notin K }(\omega-\sin\omega)\,dP+2\pi F=L^{2}, 
\end{equation}
where 
$\omega=\omega(P)$ is the \emph{visual angle} of $K$ from the point $P$, that is the angle between the two tangents from $P$ to the boundary of $K$. In terms of Integral Geometry 
 both sides of this formula represent the measure of pairs of lines meeting~$K$. In fact the measure of all pairs of lines meeting $K$ is $L^{2}$,  
twice the integral of $\omega-\sin\omega$ with respect to the area element  $dP$
is the measure of those pairs of lines intersecting themselves outside $K$ and  $2\pi F$ is the measure of those  intersecting themselves in~$K$.  

Later on,  Hurwitz in $1902$, in his celebrated paper \cite{Hurwitz1902} on the application of Fourier series to geometric problems, 
considers the integral of some new functions of the visual angle. 
Concretely he proves 
\begin{equation}\label{21maig-3}
\int_{P\notin K}f_{k}(\omega)\,dP=L^{2}+(-1)^{k}\pi^{2}(k^{2}-1)c_{k}^{2}, 
\end{equation}
where 
\begin{equation}\label{maig9-3}
f_{k}(\omega)=-2\sin\omega+\frac{k+1}{k-1}\sin ((k-1)\omega)-\frac{k-1}{k+1}\sin((k+1)\omega), \quad k\geq 2,
\end{equation} 
and $c_{k}^{2}=a_{k}^{2}+b_{k}^{2}$, with $a_{k},b_{k}$  the Fourier coefficients of the support function of~$K$.

In the particular case $k=2$ formula \eqref{21maig-3} gives
\begin{equation}\label{24maig}
\int_{P\notin K}\sin^{3}\omega\,dP=\frac{3}{4}L^{2}+\frac{9}{4}\pi^{2}c_{2}^{2}.
\end{equation}
\medskip

Masotti in $1955$ (\cite{masotti2}) states without proof the following  Crofton's type formula
 \begin{equation}\label{21maig-4}
 \int_{P\notin K}(\omega^{2}-\sin^{2}\omega)\,dP=-\pi^{2}F+\frac{4L^{2}}{\pi}+8\pi\sum_{k\geq 1}\left(\frac{1}{1-4k^{2}}\right)c_{2k}^{2}.
 \end{equation}
In \cite{CGR} a unified approach that encompasses the previous results is provided. As well the following  formula for the integral of any power of the sine function of the visual angle, that generalises \eqref{24maig},  is given:
\begin{multline}\label{21maig-5}
\int_{P\notin K}\sin^{m}(\omega)\,dP=\frac{m!}{2^{m}(m-2)\Gamma(\frac{m-1}{2})^{2}}\, L^{2} \\*[5pt] 
+\frac{m!\pi^{2}}{2^{m-1}(m-2)}\sum_{k\geq 2, \text{ even}}\frac{(-1)^{\frac{k}{2}+1}(k^{2}-1)}{\Gamma(\frac{m+1+k}{2})\Gamma(\frac{m+1-k}{2})}c_{k}^{2}.
\end{multline}

In this paper we deal with a general type of integral formulas of the visual angle including those we have just commented above, from the point of view of Integral Geometry according  to Crofton and Santal\'o \cite{santalo}. The purpose is twofold: to provide an interpretation of these formulas in terms of integrals of densities  
with respect to the canonical measure in the space of pairs of lines and to give new simpler proofs of them.  

\medskip
For each straight line $G$ of the plane that does not pass through the origin let $P$ be the point of $G$ at a minimum distance from the origin. We take as coordinates 
for $G$ the polar coordinates $(p, \varphi)$ of the point $P$, with $p>0$ and $0\leq \varphi <2\pi$.
The invariant measure in the set of lines of the plane not containing the origin
is given by a constant multiple of   
$dG=dp\,d\varphi$.  In the space of ordered pairs of lines   we consider the canonical measure $dG_{1}\, dG_{2}$. This measure  is, except for a constant factor, the only one invariant  under Euclidean motions (see \cite{santalo}).
For every function $\tilde{f}(G_{1},G_{2})$
integrable with respect to  $dG_{1}\, dG_{2}$
we can consider the measure with density $\tilde{f}$, that is  $\tilde{f}(G_{1},G_{2})\,dG_{1}\, dG_{2}$.
We prove in Proposition \ref{maig9b} that this measure is invariant under Euclidean motions if and only if $\tilde{f}(G_{1},G_{2})=f(\varphi_{2}-\varphi_{1})$ with $f$ a $\pi$-periodic function on $\R$.

For such densities, and under some additional hypothesis, it follows from  Theorem \ref{aagg} and Corollary \ref{centredreta} that
\begin{multline}\label{21maig}
A_{0}L^{2}+\pi^{2}\sum_{n\geq 1}c_{2n}^{2}A_{2n}= \int_{G_{i}\cap K\neq\emptyset}f(\varphi_{2}-\varphi_{1})\,dG_{1}\, dG_{2} \\ 
= 2H(\pi) F +2\int_{P\notin K} H(\omega)\,dP,
\end{multline}
where $A_{k}$, $k\geq 0$, are the Fourier coefficients of $f$ corresponding to 
$\cos(k\varphi)$, and $H(x)$ is a ${\mathcal C}^{2}$ function on $[0,\pi]$ satisfying $H''(x)=f(x)\sin (x)$, $x\in [0,\pi]$, and $H(0)=H'(0)=0$. 

The above  
two equalities are the main tools to obtain both new proofs of the formulas discussed above  and their interpretation as integrals of densities with respect to the canonical measure in the space of pairs of lines. As concerning to this  second point, in section \ref{21maig-7} one obtains the following formulas.  

\begin{itemize}
\item[-] {\em Crofton's formula}
$$
\int_{P\notin K}(\omega-\sin\omega)\,dP=-\pi F+\frac{1}{2}\int_{G_{i}\cap K\neq\emptyset}dG_{1}\, dG_{2} .
$$
\end{itemize}
\begin{itemize}
\item[-] {\em Hurwitz's formula}
$$
\int_{P\notin K}f_{k}(\omega)\,dP=\int_{G_{i}\cap K\neq\emptyset}(1+(-1)^{k}(k^{2}-1)\cos(k(\varphi_{2}-\varphi_{1})))\,dG_{1}\, dG_{2}  .
$$
\end{itemize}
\begin{itemize}
\item[-] {\em Masotti's formula}
$$
\int_{P\notin K}(\omega^{2}-\sin^{2}\omega)\,dP=-\pi^{2} F+2\int_{G_{i}\cap K\neq\emptyset}|\sin(\varphi_{2}-\varphi_{1})|\,dG_{1}\, dG_{2} .
$$
\end{itemize}
\begin{itemize}
\item[-] {\em Power sine formula}
\begin{multline*}
\!\!\int_{P\notin K}\sin^{m}\omega\,dP\\*[5pt] 
\!=\frac{1}{2}\int_{G_{i}\cap K\neq\emptyset}\!\!\left(m(m-1)|\mathrm{sin}^{m-3}(\varphi_{2}-\varphi_{1})|\!-\!m^{2}|\mathrm{sin}^{m-1}(\varphi_{2}-\varphi_{1})|\right)\,dG_{1}\, dG_{2}. \!\!\!
\end{multline*}
\end{itemize}
%
%
%

Moreover using  the first equality in \eqref{21maig} one gets the announced  new  proofs of formulas 
\eqref{21maig-2}, \eqref{21maig-4} and \eqref{21maig-5}.

Concerning Hurwitz's integral, when we apply the methods here developed,   it appears a different behavior according to $k$ 
is either even or odd. For $k$ even using~\eqref{21maig} one gets a new proof of \eqref{21maig-3}. Nevertheless when $k$ is odd the density associated 
to the Hurwitz integral  is not $\pi$-periodic since the function $\cos(kx)$ is not, and so we cannot use \eqref{21maig}.    In this case appealing to Proposition \ref{antipi} one obtains   a new result 
that involves a decomposition of the visual angle $\omega$ into two parts $\omega=\omega_{1}+\omega_{2}$
that also have a geometrical interpretation. 

In this setting 
it plays a role the function $f_{k}(\omega)+2(\sin\omega-\omega)$, that is the sum 
of the functions of Hurwitz and Crofton. In spite of $\int_{P\notin K}(f_{k}(\omega)+2(\sin\omega-\omega))\,dP$ depends on $k$, the surprising fact is that, for $k$ odd,  decomposing the visual angle~$\omega$
into the two parts $\omega_{1}$, $\omega_{2}$ 
and adding the corresponding integrals 
leads to 
\begin{equation*}
\int_{P\notin K} \left(f_{k}(\omega_{1})+2(\sin \omega_{1}-\omega_{1})+ f_{k}(\omega_{2})+2(\sin \omega_{2}-\omega_{2})\right)\,dP = 2\pi F,
\end{equation*}
for each $k\geq 3$ odd, as a consequence of Proposition \ref{21maig-6}.

Moreover it will appear that the functions of Crofton and Hurwitz are in some  sense a basis for the integral of any $\pi$-periodic or anti $\pi$-periodic density 
with respect to the measure $dG_{1}\,dG_{2}$ over the set of pairs of lines 
meeting a given compact convex set.

\section{Densities in the space of pairs of lines }
For every function $\tilde{f}(G_{1},G_{2})$ 
defined on the space of pairs of lines  integrable with respect to  the measure $dG_{1}\, dG_{2}$ we consider the measure with density $\tilde{f}$, that is the measure $\tilde{f}(G_{1},G_{2})\,dG_{1}\, dG_{2}$.
The measure of a set~$A$  of pairs of lines in the plane
is then given by 
$$
\int_{A}\tilde{f}(G_{1}, G_{2})\,dG_{1}\, dG_{2}.
$$
We want now to determine when this measure is invariant under Euclidean motions. 
\begin{proposition}\label{maig9b}
The measure $\tilde{f}(G_{1},G_{2})\,dG_{1}\, dG_{2}$
is invariant under the group of Euclidean motions if and only if
$\tilde{f}(G_{1},G_{2})=\tilde{f}(p_{1},\varphi_{1},p_{2},\varphi_{2})=f(\varphi_{2}-\varphi_{1})$
with~$f$ a $\pi$-periodic function on $\R$, where $(p_{i},\varphi_{i})$ are the coordinates of $G_{i}$. 
\end{proposition}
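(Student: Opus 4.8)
The plan is to characterize invariance by examining how the measure transforms under the generators of the Euclidean motion group, namely translations and rotations (reflections reduce to these for the orientation question, but I will check them if needed). The key computational fact I would establish first is the precise action of a rigid motion on the line coordinates $(p,\varphi)$, and in particular how the difference $\varphi_2-\varphi_1$ behaves. The crucial observation is that $dG=dp\,d\varphi$ is itself invariant (this is the classical Cayley–Crofton kinematic density), so the product measure $dG_1\,dG_2$ is invariant; hence invariance of $\tilde f\,dG_1\,dG_2$ is equivalent to the requirement that $\tilde f$ be an invariant \emph{function} on the space of pairs of lines. The whole statement thus reduces to: the invariant functions are exactly those depending only on $\varphi_2-\varphi_1$, and among these exactly the $\pi$-periodic ones give a well-defined function on the space of lines.

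First I would write down the transformation laws. Under a rotation by angle $\alpha$ about the origin, a line with coordinates $(p,\varphi)$ goes to $(p,\varphi+\alpha)$, so $\varphi_2-\varphi_1$ is unchanged and $p_1,p_2$ are unchanged. Under a translation by a vector $v$, the foot-of-perpendicular parameter changes as $p\mapsto p+\langle v,\,(\cos\varphi,\sin\varphi)\rangle$ while $\varphi$ is unchanged (up to the bookkeeping that keeps $p>0$). From these two laws I read off that the orbit of a generic pair of lines under the full motion group is parametrized precisely by the single quantity $\varphi_2-\varphi_1$: rotations let me fix $\varphi_1$ to any chosen value, and translations let me move $p_1,p_2$ to any admissible pair, so the only motion-invariant combination of the four coordinates is $\varphi_2-\varphi_1$. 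This forces $\tilde f(G_1,G_2)=f(\varphi_2-\varphi_1)$ for the function to be invariant, giving the ``only if'' direction, and conversely any such $f(\varphi_2-\varphi_1)$ is manifestly invariant under all the generators, giving ``if.''

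The remaining point — and the one I expect to be the genuinely delicate step — is the $\pi$-periodicity. This is not an invariance condition per se but a \emph{well-definedness} condition: the coordinates $(p,\varphi)$ with $p>0$, $0\le\varphi<2\pi$ parametrize a line by its foot of perpendicular, but a given unoriented line has a well-defined angle $\varphi$ only modulo the choice implicit in $p>0$. I would argue that when one allows $p$ to range over $\R$ (dropping the $p>0$ restriction), the pairs $(p,\varphi)$ and $(-p,\varphi+\pi)$ represent the \emph{same} line, so a genuine function of the line must be invariant under $\varphi\mapsto\varphi+\pi$ on each coordinate; applied to either $\varphi_1$ or $\varphi_2$ this sends $\varphi_2-\varphi_1$ to $\varphi_2-\varphi_1\pm\pi$, and hence $f$ must satisfy $f(t)=f(t+\pi)$, i.e.\ be $\pi$-periodic. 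The care here is to handle the convention $p>0$ consistently with the actual kinematic density and to make sure the translation action is reconciled with this identification; I would make this rigorous by passing to the standard model of the line space as a quotient, where $\pi$-periodicity emerges as exactly the descent condition for $f$ to define a function on the quotient. Conversely, $\pi$-periodicity of $f$ guarantees $f(\varphi_2-\varphi_1)$ descends to a genuine function on pairs of unoriented lines, completing the equivalence.
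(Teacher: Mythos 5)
Your proposal is correct, and most of its skeleton coincides with the paper's proof: both reduce invariance of the measure to invariance of the density function (using that $dG_{1}\,dG_{2}$ is itself motion-invariant), both kill the dependence on $p_{1},p_{2}$ by translations --- your observation that $v\mapsto(\langle v,u_{1}\rangle,\langle v,u_{2}\rangle)$ is onto $\R^{2}$ for non-parallel directions is the linear-algebra form of the paper's translation carrying $G_{1}\cap G_{2}$ to $G_{1}'\cap G_{2}'$ --- and both use rotations to reduce to a function of $\varphi_{2}-\varphi_{1}$. The genuine divergence is the source of the $\pi$-periodicity. The paper obtains it as an \emph{invariance} condition: in the canonical coordinates ($p>0$, $0\le\varphi<2\pi$) the translation by $-(p+\epsilon)(\cos\varphi,\sin\varphi)$ carries the line $(p,\varphi)$ to the line with coordinates $(\epsilon,\varphi+\pi)$, so translation invariance alone forces $\tilde f$ to be $\pi$-periodic in each angle. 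You instead pass to the doubled model $p\in\R$ with the identification $(p,\varphi)\sim(-p,\varphi+\pi)$, under which translations never shift $\varphi$, and you recover $\pi$-periodicity as the descent condition for $f(\varphi_{2}-\varphi_{1})$ to be well defined on the quotient. The two mechanisms are equivalent --- your identification is exactly what the paper's explicit translation realizes --- but your slogan that $\pi$-periodicity is ``not an invariance condition per se'' is true only inside your quotient model: in the paper's coordinates \emph{every} function of the $(p_{i},\varphi_{i})$ is automatically a well-defined function of the pair of lines (for instance the anti-$\pi$-periodic densities $\cos(k(\varphi_{2}-\varphi_{1}))$, $k$ odd, which the paper integrates in Proposition \ref{antipi}, are perfectly meaningful), and what excludes them from being invariant is precisely translation across the origin. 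Your route buys a cleaner group action (no angle jumps under translation) at the price of verifying compatibility of that action with the identification, which you correctly flag as the delicate step; the paper's route needs no quotient, at the price of the case-bookkeeping hidden in the jump $(p,\varphi)\mapsto(\epsilon,\varphi+\pi)$.
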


\begin{proof}
The invariance of the measure  is equivalent to the equality $\tilde{f}(p_{1},\!\varphi_{1},p_{2},\!\varphi_{2}) \!=\!\!$ $\tilde{f}(p'_{1},\varphi'_{1},p'_{2},\varphi'_{2})$
for each  Euclidean motion sending the lines with coordinates $(p_{1},\varphi_{1}, p_{2},\varphi_{2})$ to the lines with coordinates  $(p'_{1},\varphi'_{1}, p'_{2},\varphi'_{2})$. 
First of all let us show that $\tilde{f}$
does not depend on $p_{1}$, $p_{2}$.
In fact, for every straight line $G=G(p,\varphi)$ and an arbitrary $a>0$ there is a parallel
line to $G$ with coordinates $(a,\varphi).$ Given two straight lines $G_{1}=G(p_{1},\varphi_{1})$, $G_{2}=G(p_{2},\varphi_{2})$ and two numbers $a_{1},a_{2}>0$ let $G'_{1}$ and $G'_{2}$ be the corresponding parallel lines with coordinates $(a_{1},\varphi_{1})$, $(a_{2},\varphi_{2}).$ Performing the translation that sends the point $G_{1}\cap G_{2}$ to the point $G'_{1}\cap G'_{2}$ we have that $\tilde{f}(p_{1},\varphi_{1},p_{2},\varphi_{2})=\tilde{f}(a_{1},\varphi_{1},a_{2},\varphi_{2})$ and so $\tilde{f}$ does not depend on~$p_{1}$ and~$p_{2}.$

Given now the line $G(p,\varphi)$ if we perform, for instance, the translation given by the vector $-(p+\epsilon)(\cos\varphi,\sin\varphi)$, $\epsilon>0$, the translated line has coordinates $(\epsilon,\varphi+\pi)$. Therefore the function $\tilde{f}$ must be $\pi$-periodic with respect to the arguments $\varphi_{1}$, $\varphi_{2}.$
Finally due to the invariance under rotations  
it follows that $\tilde{f}(p_{1},\varphi_{1},p_{2},\varphi_{2})=\tilde{f}(p_{1},0,p_{2},\varphi_{2}-\varphi_{1})$ and so
$\tilde{f}(p_{1},\varphi_{1},p_{2},\varphi_{2})=f(\varphi_{2}-\varphi_{1})$ with $f$ a $\pi$-periodic function. 
\end{proof}

Our goal is now to integrate a measure given by a density over the set of pairs of lines meeting $K$. In view of Proposition \ref{maig9b} we shall only consider densities which depend on the angle  of the two lines, that is of the form $\tilde{f}(G_{1},G_{2})=f(\varphi_{2}-\varphi_{1})$, with $G_{i}=G_{i}(p_{i},\varphi_{i})$, $i=1,2$. Note that $\varphi_{2}-\varphi_{1}$ gives one of the two angles between the lines $G_{1}$ and $G_{2}$. 

\medskip
We give a formula to compute the integral of the measure $\tilde{f}(G_{1},G_{2})\,dG_{1}\, dG_{2}=f(\varphi_{2}-\varphi_{1})\,dG_{1}\, dG_{2}$, with $f$ a $2\pi$-periodic function  extended to the pairs of lines meeting $K$ in terms of both the Fourier coefficients of $f$ and  of the support function of $K$. Recall that when the origin of coordinates is an interior point of $K$, a hypothesis that we will assume from now on,  the support function $p(\varphi)$
is given by the distance to the origin of the tangent to $K$
whose normal makes and angle~$\varphi$ with  the positive part of the real axis (see \cite{santalo}). 

\begin{theorem}
\label{aagg} Let $K$ be a compact convex set with boundary of length $L$.
Let $f$ be a $2\pi$-periodic continuous function on $\R$ with Fourier expansion
$$
f(\varphi)=\sum_{n\geq 0}A_{n}\cos(n\varphi)+B_{n}\sin(n\varphi).
$$
Then 
\begin{equation}\label{eq31}
\int_{G_{i}\cap K\neq\emptyset}f(\varphi_{2}-\varphi_{1})\,dG_{1}\,dG_{2}=A_{0}L^{2}+\pi^{2}\sum_{n\geq 1}c_{n}^{2}A_{n},
\end{equation}
with  $c_{n}^{2}=a_{n}^{2}+b_{n}^{2}$ where $a_{n}$, $b_{n}$ are the Fourier coefficients of the support 
function~$p(\varphi)$ of  $K$. 
\end{theorem}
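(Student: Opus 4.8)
The plan is to collapse the four-dimensional integral over pairs of lines into a two-dimensional integral over the two normal directions $(\varphi_1,\varphi_2)$, and then to evaluate that integral by inserting the Fourier expansions of $f$ and of the support function $p$ and using orthogonality.

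First I would use the hypothesis that the origin is interior to $K$ to identify the domain of integration: a line with coordinates $(p,\varphi)$ meets $K$ exactly when $0\le p\le p(\varphi)$, where $p(\varphi)$ is the support function. Since the density $f(\varphi_2-\varphi_1)$ does not depend on the radial variables $p_1,p_2$, I can integrate them out first, obtaining
\[
\int_{G_i\cap K\neq\emptyset}f(\varphi_2-\varphi_1)\,dG_1\,dG_2
=\int_0^{2\pi}\!\!\int_0^{2\pi}f(\varphi_2-\varphi_1)\,p(\varphi_1)\,p(\varphi_2)\,d\varphi_1\,d\varphi_2,
\]
because $\int_0^{p(\varphi_i)}dp_i=p(\varphi_i)$. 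This reduces everything to a single, manageable double integral.

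Next I would substitute the Fourier series of $f$ and apply the addition formulas to $\cos(n(\varphi_2-\varphi_1))$ and $\sin(n(\varphi_2-\varphi_1))$, which separates the $\varphi_1$ and $\varphi_2$ dependence and makes the double integral factor into products of single integrals. The relevant moments of the support function are $\int_0^{2\pi}p(\varphi)\,d\varphi=L$ (Cauchy's formula for the perimeter), and, for $n\ge 1$, $\int_0^{2\pi}p(\varphi)\cos(n\varphi)\,d\varphi=\pi a_n$ and $\int_0^{2\pi}p(\varphi)\sin(n\varphi)\,d\varphi=\pi b_n$, by orthogonality of the trigonometric system. Then a short bookkeeping gives the result: the constant term contributes $A_0\bigl(\int_0^{2\pi}p\,d\varphi\bigr)^2=A_0L^2$; each cosine term contributes $A_n\bigl[(\pi a_n)^2+(\pi b_n)^2\bigr]=\pi^2A_n c_n^2$; and each sine term contributes $B_n\bigl[(\pi a_n)(\pi b_n)-(\pi b_n)(\pi a_n)\bigr]=0$, so the coefficients $B_n$ drop out entirely. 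Summing over $n$ yields $A_0L^2+\pi^2\sum_{n\ge1}c_n^2A_n$, as claimed.

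The step requiring the most care is the interchange of the infinite Fourier summation of $f$ with the integration against the finite measure $p(\varphi_1)p(\varphi_2)\,d\varphi_1\,d\varphi_2$; this is exactly where the continuity of $f$ enters, and I would justify the term-by-term integration either through $L^2$-convergence combined with the orthogonality relations above, or by uniformly approximating $f$ by its partial sums. The two geometric inputs — the description of the domain as $0\le p\le p(\varphi)$ and Cauchy's formula $L=\int_0^{2\pi}p(\varphi)\,d\varphi$ — are standard, and the trigonometric computation is routine once the integral has been reduced as above.
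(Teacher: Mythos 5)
Your proposal is correct. It shares with the paper the key first reduction: since the origin is interior to $K$, a line $(p,\varphi)$ meets $K$ exactly when $0\le p\le p(\varphi)$, so integrating out $p_1,p_2$ yields $\int_0^{2\pi}\!\int_0^{2\pi}p(\varphi_1)p(\varphi_2)f(\varphi_2-\varphi_1)\,d\varphi_1\,d\varphi_2$. From there the mechanics differ, though both rest on orthogonality. The paper performs the change of variables $w=\varphi_2-\varphi_1$, $u=\varphi_1$, expands the translate $p(u+w)$ as a Fourier series in $w$ with $u$-dependent coefficients, applies the Plancherel identity to the inner integral, and then integrates against $p(u)$; there the vanishing of the $B_n$-contributions appears only at the last step, through the cancellation $(A_na_n+B_nb_n)a_n+(A_nb_n-B_na_n)b_n=A_n(a_n^2+b_n^2)$. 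You instead expand $f(\varphi_2-\varphi_1)$ by the addition formulas, so the double integral factors directly into products of the moments $\int_0^{2\pi}p\cos(n\varphi)\,d\varphi=\pi a_n$, $\int_0^{2\pi}p\sin(n\varphi)\,d\varphi=\pi b_n$ and $\int_0^{2\pi}p\,d\varphi=L=2\pi a_0$. This is slightly more elementary (no change of variables, no Plancherel) and makes structurally transparent why the $B_n$ drop out: the antisymmetric expansion of $\sin(n(\varphi_2-\varphi_1))$ produces $(\pi b_n)(\pi a_n)-(\pi a_n)(\pi b_n)=0$ identically. Your care about interchanging the infinite sum with integration (justified by uniform approximation or $L^2$-convergence) is the same justification the paper needs, implicitly, to invoke Plancherel term by term. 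Both routes prove exactly the same statement; yours is marginally shorter and self-contained, while the paper's convolution-plus-Plancherel viewpoint emphasizes conceptually that only the ``diagonal'' Fourier data $A_nc_n^2$ can survive the integration.
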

\begin{proof}
We have
\begin{equation}\label{14g}
\begin{split}
\int_{G_{i}\cap K\neq\emptyset}\! f(\varphi_{2}-\varphi_{1})\,dG_{1}\,dG_{2}&=\!\int_{0}^{2\pi}\int_{0}^{2\pi}\int_{0}^{p(\varphi_{1})}\int_{0}^{p(\varphi_{2})}\!f(\varphi_{2}-\varphi_{1})\,dp_{1}\,dp_{2}\,d\varphi_{1}\,d\varphi_{2}\\*[5pt]
&=\!\int_{0}^{2\pi}\int_{0}^{2\pi}p(\varphi_{1})p(\varphi_{2})f(\varphi_{2}-\varphi_{1})\,d\varphi_{1}\,d\varphi_{2}.
\end{split}
\end{equation}
Performing the change of variables $\varphi_{2}-\varphi_{1}=w$,  $\varphi_{1}=u$
the integral \eqref{14g} becomes
\begin{equation}\label{14g2}
\int_{0}^{2\pi}p(u)\int_{-u}^{2\pi-u}p(u+w)f(w)\,dw\,du.
\end{equation}
The Fourier development  of $p(u+w)$ in terms of 
the Fourier coefficients $a_{n}$, $b_{n}$ of~$p(u)$  is
\begin{multline*}
p(u+w)=a_{0}+\sum_{n\geq 1}\biggl((a_{n}\cos(nu)+b_{n}\sin(nu))\cos(nw)\\ +(-a_{n}\sin(nu)+b_{n}\cos(nu))\sin(nw)\biggr).
\end{multline*}
By the Plancherel identity the integral \eqref{14g2} 
is equal to
\begin{multline*}
\int_{0}^{2\pi}p(u)\biggl[2\pi \,A_{0}\,a_{0}+\pi\sum_{n\geq 1}A_{n}(a_{n}\cos(nu)+b_{n}\sin(nu))\\*[5pt] 
+B_{n}(-a_{n}\sin(nu)+b_{n}\cos(nu))   \biggr]\,du  \\*[5pt] 
=
\int_{0}^{2\pi}p(u)\biggl[2\pi \,A_{0}\,a_{0}+\pi\sum_{n\geq 1}(A_{n}a_{n}+B_{n}b_{n})\cos(nu)+(A_{n}b_{n}-B_{n}a_{n})\sin(nu))  \biggr]\,du  \\*[5pt] 
=
4\pi^{2}A_{0}\,a_{0}^{2}+\pi^{2}\sum_{n\geq 1}(A_{n}a_{n}+B_{n}b_{n})a_{n}+(A_{n}b_{n}-B_{n}a_{n})b_{n}\\*[5pt]
=
4\pi^{2}A_{0}\,a_{0}^{2}+\pi^{2}\sum_{n\geq 1}A_{n}(a_{n}^{2}+b_{n}^{2})=
A_{0}L^{2}+\pi^{2}\sum_{n\geq 1}A_{n}c_{n}^{2},
\end{multline*}
where we have used that $L=2\pi a_{0}$, which is a consequence of the equality $L=\int_{0}^{2\pi}p(\varphi)\,d\varphi$ (see for instance \cite{santalo}), and the Theorem  is proved. 
\end{proof}

As it is well known (see \cite{Hurwitz1902}) the quantities $c_{k}^{2}=a_{k}^{2}+b_{k}^{2}$, $k\geq 2$, are invariant under Euclidean motions of $K$. 
However $c_{1}^{2}$ changes when moving $K$. So 
the integral in \eqref{eq31} is  invariant under Euclidean motions of $K$ if and only if $A_{1}=0$. In particular this is the case when $f$ is $\pi$-periodic.
\medskip

For a density given by a $\pi$-periodic function $f$ and a compact set of constant width the measure of the pairs of lines that intersect  $K$ is proportional to $L^{2}$. More precisely we have 

\begin{corollary}
Let $K$ be a compact convex set of constant width and $f$ a continuous $\pi$-periodic function. Then
$$
\int_{G_{i}\cap K\neq\emptyset}f(\varphi_{2}-\varphi_{1})\,dG_{1}\,dG_{2}=\lambda L^{2},
$$ 
where $\lambda=(1/\pi)\int_{0}^{\pi}f(\varphi)\,d\varphi$.
\end{corollary}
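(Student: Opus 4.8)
The plan is to reduce everything to Theorem \ref{aagg} and to exploit two complementary vanishing phenomena acting on the sum $\pi^{2}\sum_{n\geq1}c_{n}^{2}A_{n}$ that appears in \eqref{eq31}: one coming from the $\pi$-periodicity of $f$, the other from the constant width of $K$.

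First I would apply Theorem \ref{aagg} directly and then record how the $\pi$-periodicity of $f$ constrains its Fourier coefficients. Writing $f(\varphi)=\sum_{n\geq0}A_{n}\cos(n\varphi)+B_{n}\sin(n\varphi)$, the condition $f(\varphi+\pi)=f(\varphi)$ together with the parity identities $\cos(n(\varphi+\pi))=(-1)^{n}\cos(n\varphi)$ and $\sin(n(\varphi+\pi))=(-1)^{n}\sin(n\varphi)$ forces $A_{n}=B_{n}=0$ for every \emph{odd} $n$. In particular all odd-indexed terms drop out of the sum in \eqref{eq31}.

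Next I would bring in the hypothesis of constant width. Recall that $K$ has constant width $w$ precisely when its support function satisfies $p(\varphi)+p(\varphi+\pi)=w$ for all $\varphi$. Expanding $p$ in Fourier series and using the same parity identities as above gives
$$p(\varphi)+p(\varphi+\pi)=2a_{0}+2\sum_{n\geq2,\ n\text{ even}}\bigl(a_{n}\cos(n\varphi)+b_{n}\sin(n\varphi)\bigr).$$
For this to equal the constant $w=2a_{0}$ one needs $a_{n}=b_{n}=0$, hence $c_{n}^{2}=a_{n}^{2}+b_{n}^{2}=0$, for every \emph{even} $n\geq2$. Combining the two observations, each term $c_{n}^{2}A_{n}$ with $n\geq1$ vanishes: for odd $n$ because $A_{n}=0$, and for even $n$ because $c_{n}^{2}=0$. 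Thus the sum in \eqref{eq31} is identically zero and the integral collapses to $A_{0}L^{2}$.

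It then remains only to identify the constant $A_{0}$ with $\lambda$: since $A_{0}=\tfrac{1}{2\pi}\int_{0}^{2\pi}f(\varphi)\,d\varphi$ and $f$ is $\pi$-periodic we have $\int_{0}^{2\pi}f=2\int_{0}^{\pi}f$, so $A_{0}=\tfrac{1}{\pi}\int_{0}^{\pi}f(\varphi)\,d\varphi=\lambda$, which finishes the argument. I expect the only non-formal ingredient to be the Fourier characterization of constant width via $p(\varphi)+p(\varphi+\pi)=\mathrm{const}$; this is classical and its proof is short, and once it is in place the statement follows as a direct consequence of Theorem \ref{aagg}, the rest being pure bookkeeping of Fourier coefficients.
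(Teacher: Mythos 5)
Your proof is correct and follows essentially the same route as the paper's: both invoke Theorem \ref{aagg} and kill the sum $\sum_{n\geq1}c_{n}^{2}A_{n}$ by noting that $\pi$-periodicity of $f$ forces $A_{n}=0$ for odd $n$ while constant width forces $c_{n}^{2}=0$ for even $n\geq 2$. The only difference is that you supply the short Fourier argument for the constant-width fact (via $p(\varphi)+p(\varphi+\pi)=\mathrm{const}$), which the paper simply cites from the literature.
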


\begin{proof}
Since $K$ is of constant width the Fourier development of $p(\varphi)$
has only odd terms (see  for instance \S 2 of \cite{CGR}). Moreover the Fourier development of $f$ has only even terms because it is $\pi$-periodic.  Hence \eqref{eq31} gives 
$$
\int_{G_{i}\cap K\neq\emptyset}f(\varphi_{2}-\varphi_{1})\,dG_{1}\,dG_{2}=A_{0}L^{2},
$$ 
with $A_{0}=(1/\pi)\int_{0}^{\pi}f(\varphi)\,d\varphi$. 
\end{proof}

\section{Integral formulas of the visual angle in terms of densities in the space of pairs of lines}
In \cite{CGR} there is a unified approach to several classical formulas involving integrals of functions of the visual angle of a compact convex set $K$. 
Among them one can find the integrals of Crofton, Masotti, powers of  sine,  and Hurwitz.

The original proof of Crofton's formula, via Integral Geometry, involves a measure
on the space of pairs of lines.  The aim of this section is to interpret the formulas in \cite{CGR} in terms of integrals of measures given by densities  
in the space of pairs of lines.

To begin with let us consider Hurwitz's formula
\begin{equation}\label{maig9-2}
\int_{P\notin K}f_{k}(\omega)\,dP=L^{2}+(-1)^{k}\pi^{2}(k^{2}-1)c_{k}^{2}, 
\end{equation}
where $f_{k}(\omega)$ is given in \eqref{maig9-3}. For a proof of \eqref{maig9-2} see \cite{Hurwitz1902} or \cite{CGR}.

Comparing this equality with \eqref{eq31} one gets immediately the following result.

\begin{proposition}\label{maig17}
Let $f_{k}$ be the Hurwitz function defined in \eqref{maig9-3}.
Then 
\begin{equation*}\label{maig9}
\int_{P\notin K}f_{k}(\omega)\,dP=\int_{G_{i}\cap K\neq\emptyset}(1+(-1)^{k}(k^{2}-1)\cos(k(\varphi_{2}-\varphi_{1})))\,dG_{1}\, dG_{2}.
\end{equation*}
\end{proposition}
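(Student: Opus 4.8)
The plan is to directly compare the two right-hand sides using the formula established in Theorem~\ref{aagg}. We are given Hurwitz's formula \eqref{maig9-2}, which computes the integral $\int_{P\notin K}f_k(\omega)\,dP$ in terms of $L^2$ and the Fourier coefficient $c_k^2$ of the support function. Separately, Theorem~\ref{aagg} lets us evaluate any integral of the form $\int_{G_i\cap K\neq\emptyset}g(\varphi_2-\varphi_1)\,dG_1\,dG_2$ against the same invariants $L^2$ and $c_n^2$. So the strategy is simply to choose the density $g$ whose associated integral reproduces exactly the right-hand side of \eqref{maig9-2}, and then read off that $g(x)=1+(-1)^k(k^2-1)\cos(kx)$.

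Concretely, first I would apply Theorem~\ref{aagg} to the candidate function
$$
g(x)=1+(-1)^{k}(k^{2}-1)\cos(kx).
$$
Its Fourier expansion is trivial to read off: the constant term is $A_0=1$, the coefficient of $\cos(kx)$ is $A_k=(-1)^k(k^2-1)$, and all other Fourier coefficients vanish. Substituting these into \eqref{eq31} gives
$$
\int_{G_{i}\cap K\neq\emptyset}g(\varphi_2-\varphi_1)\,dG_1\,dG_2 = A_0 L^2 + \pi^2 c_k^2 A_k = L^2 + (-1)^k\pi^2(k^2-1)c_k^2.
$$
This is precisely the right-hand side of Hurwitz's formula \eqref{maig9-2}.

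Finally, I would chain the two equalities: the left-hand side $\int_{P\notin K}f_k(\omega)\,dP$ equals $L^2+(-1)^k\pi^2(k^2-1)c_k^2$ by \eqref{maig9-2}, and this in turn equals the density integral just computed. Combining them yields the claimed identity. The proof is essentially a one-line matching argument, so I do not anticipate a genuine obstacle; the only point requiring a moment's care is checking that $g$ is an admissible density for Theorem~\ref{aagg}, namely that it is a $2\pi$-periodic continuous function with the stated Fourier coefficients---both of which are immediate since $g$ is a finite trigonometric polynomial. One remark worth noting, consistent with the surrounding discussion in the excerpt, is that $g$ is $\pi$-periodic precisely when $k$ is even; for $k$ odd the density $\cos(kx)$ fails to be $\pi$-periodic, so although the present comparison-based identity still holds, the invariance interpretation via Proposition~\ref{maig9b} applies only in the even case.
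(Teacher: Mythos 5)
Your proposal is correct and is essentially identical to the paper's own proof, which simply observes that applying Theorem~\ref{aagg} to the density $1+(-1)^{k}(k^{2}-1)\cos(k(\varphi_{2}-\varphi_{1}))$ (with $A_{0}=1$, $A_{k}=(-1)^{k}(k^{2}-1)$, all other coefficients zero) reproduces the right-hand side of Hurwitz's formula \eqref{maig9-2}, so the two integrals coincide. Your closing remark on $\pi$-periodicity is also consistent with the paper's discussion: Theorem~\ref{aagg} only requires $2\pi$-periodicity, so the identity holds for all $k$, while the invariance interpretation is restricted to $k$ even.
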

Nevertheless for the other quoted integral formulas it is not clear at all  what  density must be chosen. We shall provide a general method to find the densities corresponding to integrals  of general functions of  the visual angle.

\subsection{A change of variables}\label{maig7-2}
The classical proof of Crofton's formula is based on the 
change of variables in the space of pairs of lines given by
$$
(p_{1},\varphi_{1},p_{2},\varphi_{2})\longrightarrow (P, \alpha_{1},\alpha_{2}),
$$ 
where $P$ is the intersection point of the two straight lines and $\alpha_{i}\in [0, \pi]$ are the angles which determine the directions of the lines. 
More precisely the angle $\alpha$ associated to a line through a given point $P$ is defined in the following way.  Let $\vec{u}$ be a unitary vector orthogonal  to $\overrightarrow{OP}$  where $O$ is the origin of coordinates, and such that the basis $(\vec{u},\overrightarrow{OP})$ is positively oriented.
Let $G$ be a line through $P$ with unitary director vector $\vec{v}$ such that the basis $(\vec{u},\vec{v})$  is positively oriented. Then $\alpha=\alpha(G)$ is defined by 
$\cos\alpha=\vec{u}\cdot \vec{v}$ and $0<\alpha<\pi$.
From now on we shall say that $\alpha$ is the {\em direction} of the line $G$.

\begin{figure}[h] 
   \centering
   \includegraphics[width=.5\textwidth]{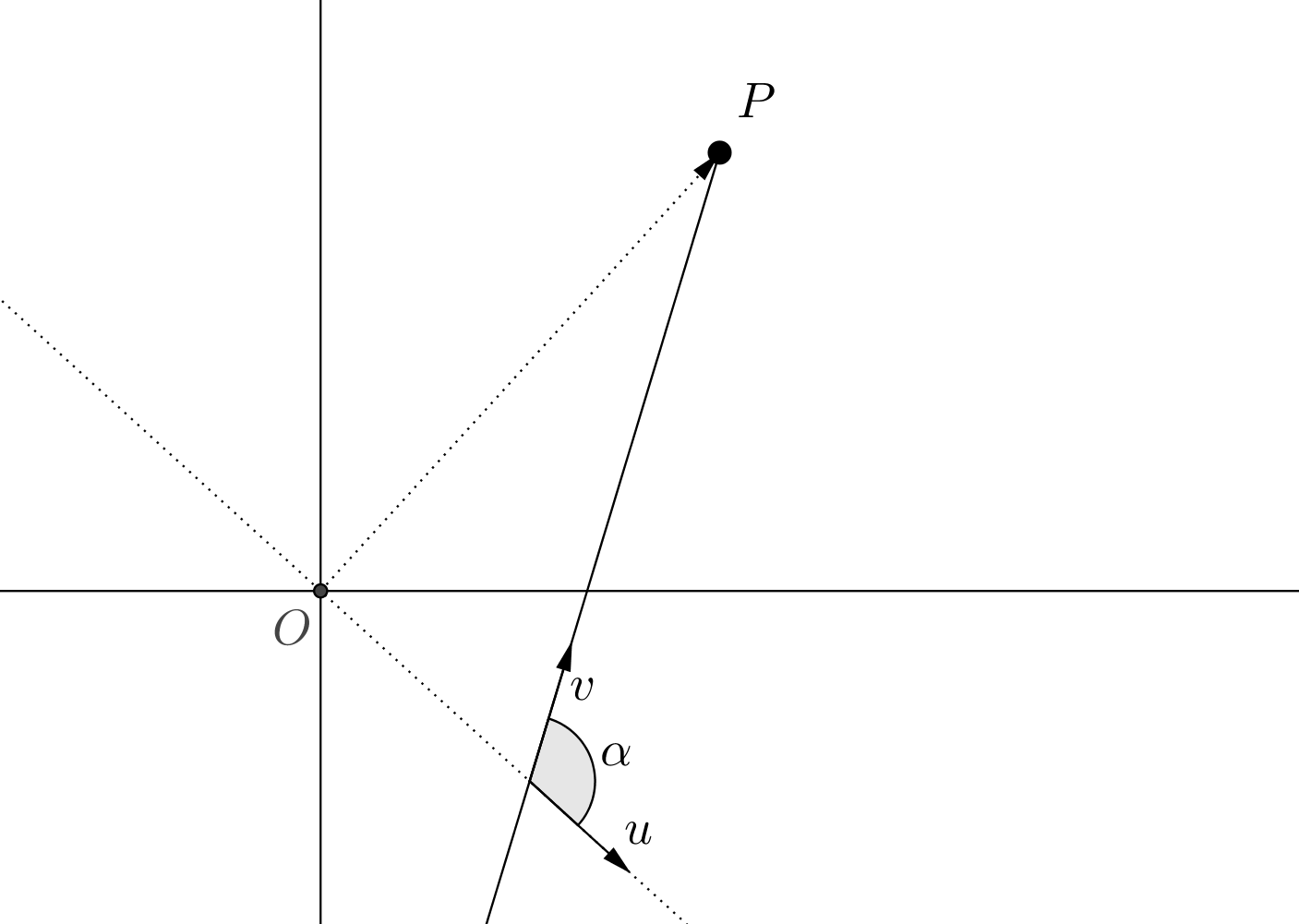}
   \caption{{\em Direction}  of a line.}
   \label{fig:direction}
\end{figure}

\noindent
With these new coordinates, proceeding as in \cite{santalo},  one has 
 \begin{equation}\label{5abr1}
 dG_{1}\, dG_{2}=|\mathrm{sin}(\alpha_{2}-\alpha_{1})|\,d\alpha_{1}\, d\alpha_{2}\, dP.
 \end{equation}
We have used  the fact that $\varphi_{2}-\varphi_{1}=\alpha_{2}-\alpha_{1}+\epsilon\pi$ where $\epsilon= \epsilon(P,\alpha_{1},\alpha_{2})=0,\pm 1$, according to the position with respect to the origin of the lines $G_{1}$, $G_{2}$. As a consequence 
if $f$ is a $\pi$-periodic function we have
\begin{equation}\label{5abril}
f(\varphi_{2}-\varphi_{1})\,dG_{1}\, dG_{2}=f(\alpha_{2}-\alpha_{1})|\mathrm{sin}(\alpha_{2}-\alpha_{1})|\,d\alpha_{1}\, d\alpha_{2}\, dP.
\end{equation}

\enlargethispage{3.5mm}
\vspace*{-9pt}
\subsection{Integrals of functions of pairs of lines meeting a  convex set}\label{maig13} For a point 
$P\notin K$ let $\alpha$, $\beta$ be the directions we have introduced corresponding to the support lines of $K$ through $P$,   with $0<\alpha<\pi/2$ and $\pi/2<\beta<\pi$. 
 Then $\omega =\beta-\alpha$ is the visual angle of $K$ from $P$.
This is the reason why we have slightly modified 
 the definition of the direction angle given by Santal\'o in \cite{santalo}
 as the angle between  the line through $P$ and  the positive $x$ axis, because  with this definition one could have   $\omega =\beta-\alpha$ or $\omega =\pi-(\beta-\alpha)$; see Figure~\ref{fig:omega}.
%
We shall provide now a general formula to calculate the integral of  the right-hand side of \eqref{5abril}.

\begin{proposition}\label{propdreta}
Let $f$ be a $2\pi$-periodic continuous function on $\R$, and $H$  a ${\mathcal C}^{2}$~function on $[-\pi,\pi]$ satisfying the conditions $H''(x)=f(x)\cdot \sin (x)$, $x\in [-\pi,\pi]$, and $ H(0)=H'(0)=0.$
Denote by $\alpha_{i}$  the direction of the line $G_{i}$.  Then
\begin{multline*}\label{25g2}
\int_{G_{i}\cap K\neq\emptyset}f(\alpha_{2}-\alpha_{1})|\mathrm{sin}(\alpha_{2}-\alpha_{1})|\,d\alpha_{1}\,d\alpha_{2}\,dP\\*[-7pt] 
=(H(\pi)-H(-\pi))F+\int_{P\notin K}(H(\omega)-H(-\omega))\,dP,
\end{multline*}
where $\omega=\omega(P)$ is the visual angle of $K$ from $P$.
\end{proposition}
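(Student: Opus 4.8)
The plan is to split the domain of integration into two regions according to whether the intersection point $P=G_1\cap G_2$ lies inside or outside $K$, and in each region integrate first over the angular variables $(\alpha_1,\alpha_2)$ for fixed $P$, leaving the point integral $dP$ for last. The key identity driving everything is the relation $H''(x)=f(x)\sin x$ together with the initial conditions $H(0)=H'(0)=0$, which lets us recover $f(x)|\sin x|$ from a second derivative of $H$.

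Let me think about how the inner angular integral works. For a fixed point $P$, the two lines are determined by their directions $\alpha_1,\alpha_2\in[0,\pi]$. The integrand is $f(\alpha_2-\alpha_1)|\sin(\alpha_2-\alpha_1)|$. Since this depends only on the difference $t=\alpha_2-\alpha_1$, I want to reduce the double integral $\int\int f(\alpha_2-\alpha_1)|\sin(\alpha_2-\alpha_1)|\,d\alpha_1\,d\alpha_2$ over the appropriate angular region to a single integral of $f(t)|\sin t|$ against some weight. The region of allowed $(\alpha_1,\alpha_2)$ depends on whether $P\in K$: if $P$ is inside $K$, both directions range freely over $[0,\pi]$ (every line through an interior point meets $K$), whereas if $P$ is outside $K$, a line through $P$ meets $K$ only when its direction lies in the arc $[\alpha,\beta]$ subtending the visual angle $\omega=\beta-\alpha$.

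So the two cases reduce to computing, for a fixed admissible angular window of length $\ell$ (namely $\ell=\pi$ when $P\in K$ and $\ell=\omega$ when $P\notin K$), the quantity
$$
\Phi(\ell)=\int_{0}^{\ell}\int_{0}^{\ell} f(\alpha_2-\alpha_1)\,|\sin(\alpha_2-\alpha_1)|\,d\alpha_1\,d\alpha_2 .
$$
Changing to $t=\alpha_2-\alpha_1$ and integrating out the other variable gives a weight $(\ell-|t|)$ on the diagonal, so
$$
\Phi(\ell)=\int_{-\ell}^{\ell}(\ell-|t|)\,f(t)\,|\sin t|\,dt .
$$
Now I use $H''(t)=f(t)\sin t$. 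On $[0,\ell]$ we have $|\sin t|=\sin t$ and on $[-\ell,0]$ we have $|\sin t|=-\sin t$, so integrating $(\ell-|t|)H''(t)$ twice by parts and applying $H(0)=H'(0)=0$ should collapse the whole thing to boundary terms at $\pm\ell$, yielding precisely $H(\ell)-H(-\ell)$. This is the computational heart of the argument: the weight $(\ell-|t|)$ is exactly the second antiderivative kernel for which the initial conditions on $H$ kill all the interior contributions, leaving $\Phi(\ell)=H(\ell)-H(-\ell)$.

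With $\Phi(\ell)=H(\ell)-H(-\ell)$ in hand, the proposition follows by integrating over $P$: the contribution from interior points is $\int_{P\in K}\Phi(\pi)\,dP=(H(\pi)-H(-\pi))F$ since $K$ has area $F$, and the contribution from exterior points is $\int_{P\notin K}\Phi(\omega)\,dP=\int_{P\notin K}(H(\omega)-H(-\omega))\,dP$. The main obstacle I anticipate is verifying the integration-by-parts carefully with the absolute value $|\sin t|$ and the kink of $(\ell-|t|)$ at $t=0$; one must check that the boundary terms at $t=0$ coming from the two halves cancel (which is where $H(0)=H'(0)=0$ is essential) and that no extra contributions survive from the corner. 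Once that bookkeeping is confirmed, the rest is immediate from Fubini and the geometric description of the angular window in the two cases.
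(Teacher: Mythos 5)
Your proposal is correct and follows essentially the same route as the paper: condition on the intersection point $P$, observe that the admissible angular window is all of $[0,\pi]$ when $P\in K$ and an interval of length $\omega$ when $P\notin K$, and evaluate the angular integral using $H''(x)=f(x)\sin x$ with $H(0)=H'(0)=0$, so that it collapses to $H(\ell)-H(-\ell)$. The only difference is organizational: the paper splits the square $[\alpha,\beta]^2$ into the two triangles $\alpha_2\geq\alpha_1$ and $\alpha_2<\alpha_1$ and iterates antiderivatives ($H'$ then $H$), whereas you pass to the difference variable $t=\alpha_2-\alpha_1$ to get the tent weight $(\ell-|t|)$ and integrate by parts once on each side of $t=0$ --- an equivalent computation whose bookkeeping indeed checks out.
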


\begin{figure}[ht] 
   \centering
   \includegraphics[width=.5\textwidth]{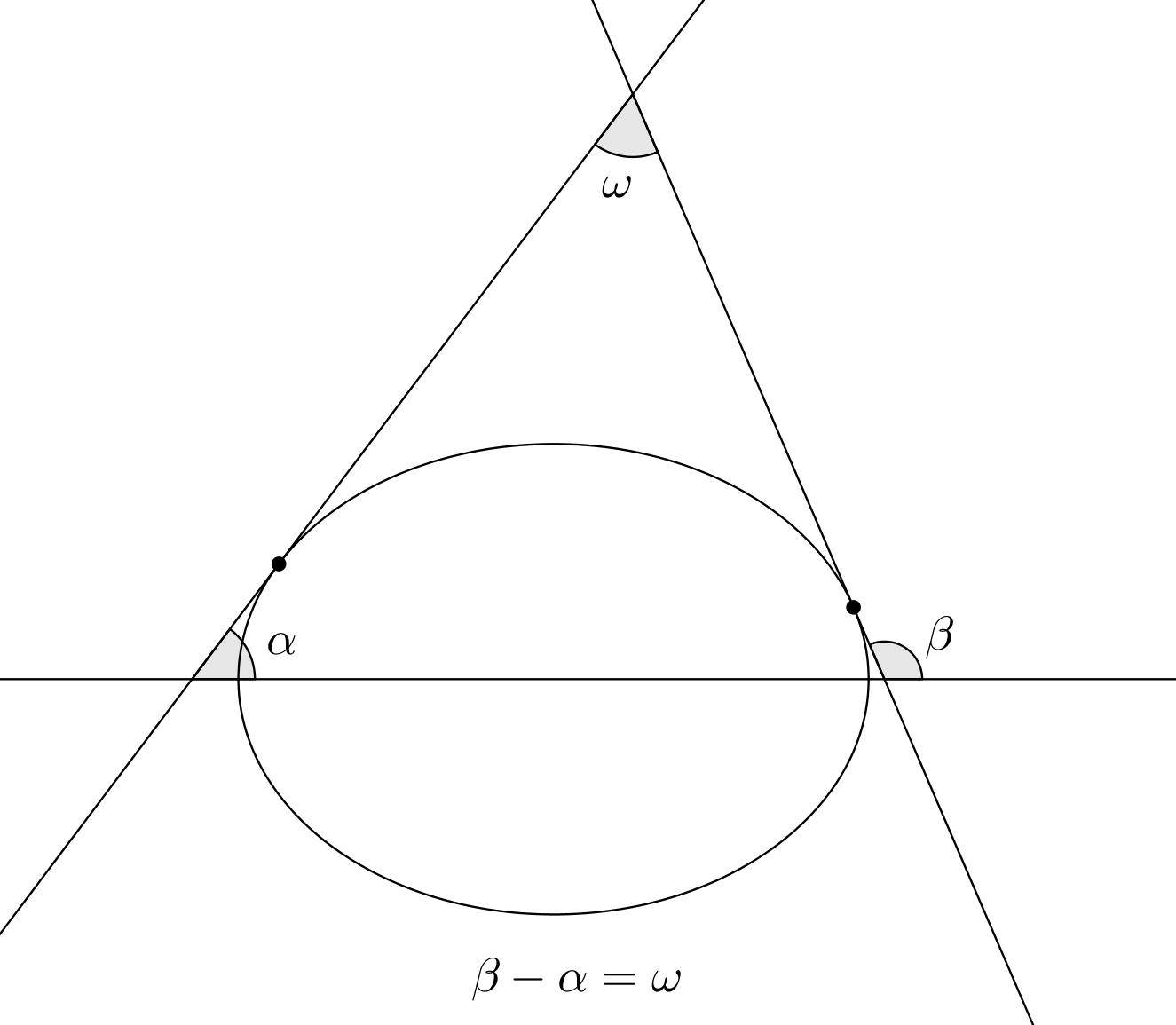}\includegraphics[width=.5\textwidth]{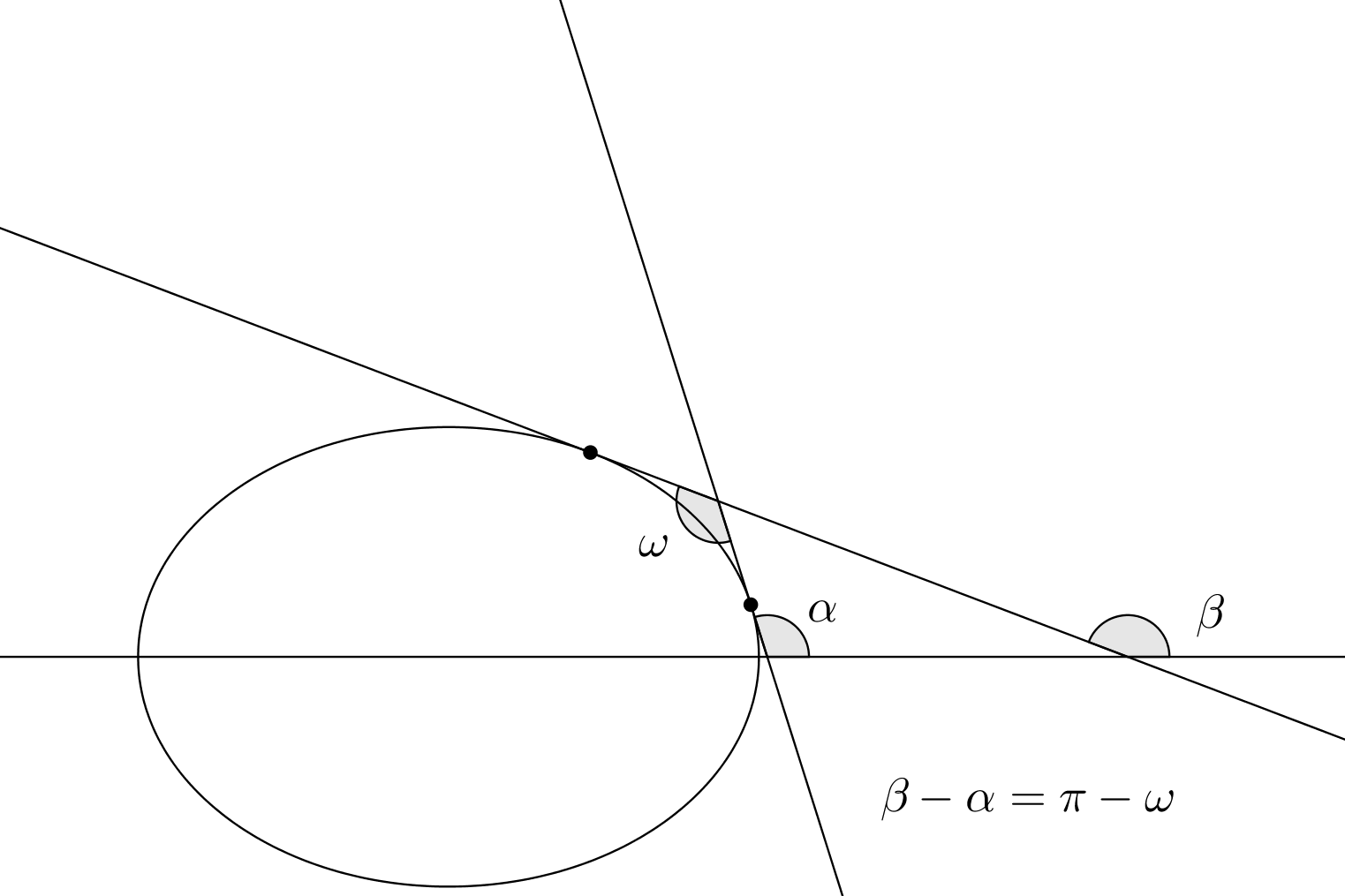}
   \caption{Visual angle of a convex set.}
   \label{fig:omega}
\end{figure}

\begin{proof}
For a given point $P$ in the plane there are angles $\alpha(P)$, $\beta(P)$ such that the pairs of lines $G_{1}$, $G_{2}$ through $P$  that intersect the convex set $K$ are those satisfying  $\alpha(P)\leq \alpha_{i}\leq \beta(P)$, where $\alpha_{i}=\alpha(G_{i})$. When $P\in K$ we have $\alpha(P)=0$ and $\beta(P)=\pi$. 

We need to integrate the function $f(\alpha_{2}-\alpha_{1})|\mathrm{sin}(\alpha_{2}-\alpha_{1})|$ over $[\alpha,\beta]^{2}$ with $\alpha=\alpha(P)$ and $\beta=\beta(P)$. In order to perform this integral we divide $[\alpha,\beta]^{2}$ into the union of the regions $\mathcal{R}_{1}=\{(\alpha_{1},\alpha_{2})\in [\alpha,\beta]^{2}: \alpha_{2}\geq \alpha_{1}\}$ and $\mathcal{R}_{2}=\{(\alpha_{1},\alpha_{2})\in [\alpha,\beta]^{2}: \alpha_{2}< \alpha_{1}\}.$ Therefore
\begin{multline*}
\int_{[\alpha,\beta]^{2}}f(\alpha_{2}-\alpha_{1})|\mathrm{sin}(\alpha_{2}-\alpha_{1})|\,d\alpha_{1} \,d\alpha_{2}\\
=\int_{\mathcal{R}_{1}}f(\alpha_{2}-\alpha_{1})\sin(\alpha_{2}-\alpha_{1})\,d\alpha_{1} \,d\alpha_{2}-\int_{\mathcal{R}_{2}}f(\alpha_{2}-\alpha_{1})\sin(\alpha_{2}-\alpha_{1})\,d\alpha_{1} \,d\alpha_{2}\\
=\int_{\alpha}^{\beta}\left(\int_{\alpha}^{\alpha_{2}}f(\alpha_{2}-\alpha_{1})\sin(\alpha_{2}-\alpha_{1})\,d\alpha_{1}\right)\,d\alpha_{2}\\
-\int_{\alpha}^{\beta}\left(\int_{\alpha}^{\alpha_{1}}f(\alpha_{2}-\alpha_{1})\sin(\alpha_{2}-\alpha_{1})\,d\alpha_{2}\right)\,d\alpha_{1}\\
=\int_{\alpha}^{\beta}\left[-H'(\alpha_{2}-\alpha_{1})\right]_{\alpha}^{\alpha_{2}}\,d\alpha_{2}-\int_{\alpha}^{\beta}\left[H'(\alpha_{2}-\alpha_{1})\right]_{\alpha}^{\alpha_{1}}\,d\alpha_{1}
\\
=\left[H(\alpha_{2}-\alpha)\right]_{\alpha}^{\beta}-\left[H(\alpha-\alpha_{1})\right]_{\alpha}^{\beta}
=H(\beta-\alpha)-H(\alpha-\beta).
\end{multline*}
Hence
\begin{multline*}
\int_{G_{i}\cap K\neq\emptyset}f(\alpha_{2}-\alpha_{1})|\mathrm{sin}(\alpha_{2}-\alpha_{1})|\,d\alpha_{1}\,d\alpha_{2}\,dP \\ =\biggl(\int_{P\in K}+\int_{P\notin K}\biggr) (H(\beta-\alpha)-H(\alpha-\beta))\,dP.
\end{multline*}
Taking into account that the visual angle $\omega(P)$ is given by $\beta(P)-\alpha(P)$ the result follows. 
\end{proof}

In the next result we assume the additional hypothesis that $f(x)$ is an even function. 

\begin{proposition}\label{prop54}
 Let $f$ be a $2\pi$-periodic continuous function on $\R$, with $f(-x)=f(x)$, $x\in\R$,  and $H$  a ${\mathcal C}^{2}$ function on $[0,\pi]$ satisfying the conditions $H''(x)=f(x)\cdot \sin (x)$, $x\in [0,\pi]$, and $ H(0)=H'(0)=0.$
Denote by $\alpha_{i}$  the direction of the line $G_{i}$.  Then
\begin{equation*}\label{25g}
\int_{G_{i}\cap K\neq\emptyset}f(\alpha_{2}-\alpha_{1})|\mathrm{sin}(\alpha_{2}-\alpha_{1})|\,d\alpha_{1}\,d\alpha_{2}\,dP=2H(\pi) F +2\int_{P\notin K} H(\omega)\,dP,
\end{equation*}
where $\omega=\omega(P)$ is the visual angle of $K$ from $P$.
\end{proposition}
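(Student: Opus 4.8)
The plan is to deduce Proposition \ref{prop54} directly from Proposition \ref{propdreta}; the whole content of the extra hypothesis $f(-x)=f(x)$ is that it forces the antiderivative $H$ to be an odd function, and oddness is exactly what collapses the two terms $H(\pi)-H(-\pi)$ and $H(\omega)-H(-\omega)$ into $2H(\pi)$ and $2H(\omega)$. So the first thing I would do is promote the given $H$, which is only prescribed on $[0,\pi]$, to a function on $[-\pi,\pi]$ that satisfies the hypotheses of Proposition \ref{propdreta}. Because $f$ is $2\pi$-periodic and continuous on all of $\R$, I can simply define $H$ on $[-\pi,\pi]$ by the iterated integral
\[
H(x)=\int_{0}^{x}\int_{0}^{s}f(t)\sin(t)\,dt\,ds,
\]
which is of class $\mathcal{C}^{2}$, satisfies $H''(x)=f(x)\sin(x)$ together with $H(0)=H'(0)=0$, and agrees with the given $H$ on $[0,\pi]$.

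Next I would establish that this $H$ is odd. The observation is that $f$ even and $\sin$ odd make the integrand $g(t):=f(t)\sin(t)$ an odd function. From $H'(x)=\int_{0}^{x}g(t)\,dt$ and the change of variable $t\mapsto -t$ one gets $H'(-x)=H'(x)$, so $H'$ is even; integrating once more, $H(-x)=-H(x)$, so $H$ is odd. (Alternatively, one checks that $x\mapsto -H(-x)$ solves the same second-order initial value problem $H''=g$, $H(0)=H'(0)=0$, and invokes uniqueness, which is immediate here since the solution is given explicitly by the double integral above.)

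Finally I would substitute into the conclusion of Proposition \ref{propdreta}. Oddness gives $H(-\pi)=-H(\pi)$ and $H(-\omega)=-H(\omega)$, whence
\[
H(\pi)-H(-\pi)=2H(\pi),\qquad H(\omega)-H(-\omega)=2H(\omega),
\]
and the right-hand side of Proposition \ref{propdreta} becomes exactly $2H(\pi)F+2\int_{P\notin K}H(\omega)\,dP$, as claimed. I do not expect any genuine obstacle: the only point requiring care is the bookkeeping of extending $H$ to $[-\pi,\pi]$ and confirming the extension still satisfies the differential equation and initial conditions so that Proposition \ref{propdreta} applies; everything after that is the elementary oddness argument and a one-line substitution.
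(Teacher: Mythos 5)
Your proof is correct. It differs in mechanism, though not in substance, from the paper's own argument: the paper does not invoke Proposition \ref{propdreta} as a black box but rather repeats its computation, observing that when $f$ is even the integrand $f(\alpha_{2}-\alpha_{1})|\sin(\alpha_{2}-\alpha_{1})|$ is symmetric under the swap $\alpha_{1}\leftrightarrow\alpha_{2}$, so the integral over the square $[\alpha,\beta]^{2}$ is twice the integral over the triangle $\{\alpha_{2}\geq\alpha_{1}\}$, which evaluates directly to $2H(\omega)$ using only the given $H$ on $[0,\pi]$. You instead keep Proposition \ref{propdreta} intact and push the evenness of $f$ one level down, into the oddness of the extended antiderivative $H$ on $[-\pi,\pi]$, so that $H(\pi)-H(-\pi)=2H(\pi)$ and $H(\omega)-H(-\omega)=2H(\omega)$; these are the same fact (evenness of $f$) exploited at the level of the antiderivative rather than at the level of the double integral. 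Your route buys a purely formal deduction with no re-integration, at the modest cost of the extension-and-uniqueness bookkeeping, which you handle correctly: the double-integral formula pins down $H$ on $[0,\pi]$ by uniqueness of the initial value problem, extends it to $[-\pi,\pi]$ with the hypotheses of Proposition \ref{propdreta} intact, and the odd/even computation for $H'$ and $H$ is sound. The paper's route avoids ever leaving $[0,\pi]$, which is why its statement can be phrased with $H$ defined only there.
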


\begin{proof} 
Just proceed as in the above proof taking into account that 
\begin{multline*}
\int_{[\alpha,\beta]^{2}}f(\alpha_{2}-\alpha_{1})|\mathrm{sin}(\alpha_{2}-\alpha_{1})|\,d\alpha_{1} \,d\alpha_{2}\\ =2\int_{\alpha}^{\beta}\left(\int_{\alpha}^{\alpha_{2}}f(\alpha_{2}-\alpha_{1})\sin(\alpha_{2}-\alpha_{1})\,d\alpha_{1}\right)\,d\alpha_{2}.
\end{multline*}
\end{proof}

For the special case where $f$ is a $\pi$-periodic function one has
\begin{corollary}\label{centredreta}  
Let $f$ be a $\pi$-periodic continuous function on $\R$, and $H$  a ${\mathcal C}^{2}$ function on $[-\pi,\pi]$ satisfying the conditions $H''(x)=f(x)\cdot \sin (x)$, $x\in [-\pi,\pi]$, and $ H(0)=H'(0)=0.$ Then 
\begin{equation*}
\label{25g4}
\int_{G_{i}\cap K\neq\emptyset}f(\varphi_{2}-\varphi_{1})\,dG_{1}\,dG_{2}\!=\! \left((H(\pi)-H(-\pi))F+\!\int_{P\notin K}(H(\omega)-H(-\omega))\,dP\right).\!
\end{equation*}
 If moreover $f(-x)=f(x)$ and $H(x)$ is ${\mathcal C}^{2}$ on $[0,\pi]$ with  $H''(x)=f(x)\cdot \sin (x)$, $x\in [0,\pi]$, and $ H(0)=H'(0)=0,$ one has
 \begin{equation}\label{25gg}
\int_{G_{i}\cap K\neq\emptyset}f(\varphi_{2}-\varphi_{1})\,dG_{1}\,dG_{2}=2H(\pi) F +2\int_{P\notin K} H(\omega)\,dP.
\end{equation}
\end{corollary}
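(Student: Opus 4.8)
The plan is to recognize the corollary as an essentially immediate consequence of the change of variables \eqref{5abril} combined with Propositions \ref{propdreta} and \ref{prop54}, the substantive work having already been done there. The guiding observation is that $\pi$-periodicity of $f$ is exactly the hypothesis that permits passage from the measure $f(\varphi_2-\varphi_1)\,dG_1\,dG_2$ expressed in the coordinates $(p_i,\varphi_i)$ to its expression in the coordinates $(P,\alpha_1,\alpha_2)$.

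First I would note that a $\pi$-periodic continuous function is in particular $2\pi$-periodic and continuous, so the standing hypotheses of Propositions \ref{propdreta} and \ref{prop54} are satisfied. Next, since $f$ is $\pi$-periodic, I would invoke \eqref{5abril}, which gives the pointwise identity of densities
$$
f(\varphi_2-\varphi_1)\,dG_1\,dG_2=f(\alpha_2-\alpha_1)|\sin(\alpha_2-\alpha_1)|\,d\alpha_1\,d\alpha_2\,dP.
$$
Integrating both sides over the set $\{G_i\cap K\neq\emptyset\}$ of pairs of lines meeting $K$ turns the integral on the left-hand side of the corollary into precisely the integral appearing on the left-hand side of Proposition \ref{propdreta}.

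With this identification made, the first displayed equality of the corollary is nothing but the conclusion of Proposition \ref{propdreta}, transcribed for the chosen $\mathcal{C}^2$ primitive $H$ on $[-\pi,\pi]$ with $H''(x)=f(x)\sin x$ and $H(0)=H'(0)=0$. For the second equality \eqref{25gg} I would additionally invoke the assumption $f(-x)=f(x)$ and apply Proposition \ref{prop54} in place of Proposition \ref{propdreta}; its conclusion $2H(\pi)F+2\int_{P\notin K}H(\omega)\,dP$ is exactly the asserted right-hand side, and in this symmetric case $H$ need only be prescribed on $[0,\pi]$.

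Because the genuine analytic content resides in the two propositions and in the derivation of \eqref{5abril}, I do not expect a real obstacle here. The only point requiring care is the logical bookkeeping around periodicity: one should verify that the $\pi$-periodicity hypothesis does double duty, namely it validates the density identity \eqref{5abril} — which truly needs $\pi$-periodicity rather than mere $2\pi$-periodicity, since it must absorb the shift $\varphi_2-\varphi_1=\alpha_2-\alpha_1+\epsilon\pi$ — while simultaneously supplying the weaker periodicity under which the propositions were stated.
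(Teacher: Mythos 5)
Your proposal is correct and follows essentially the same route as the paper's own proof: the paper likewise observes that $\pi$-periodicity of $f$ is precisely what validates the density identity \eqref{5abril}, and then obtains the two asserted equalities by applying Proposition \ref{propdreta} and Proposition \ref{prop54} respectively. Your additional remark that $\pi$-periodicity (not mere $2\pi$-periodicity) is needed to absorb the shift $\varphi_{2}-\varphi_{1}=\alpha_{2}-\alpha_{1}+\epsilon\pi$ is exactly the point the paper emphasizes.
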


\begin{proof}
When $f$ is a $\pi$
-periodic function we have equality \eqref{5abril} and the result is then a consequence of Proposition  \eqref{propdreta} and Proposition \eqref{prop54}. 
\end{proof}

Integral formulas as those given in \eqref{eq31} and \eqref{25gg} open the possibility to prove interesting relations for quantities linked to convex sets. For instance when applied to the function $f(x)=\cos kx$ they give Hurwitz's formula \eqref{maig9-2} for $k$ even (see section \ref{hurwitzsec}). 

For odd values of $k$ the Corollary \ref{centredreta} does not apply  because $f(x)=\cos kx$ is not a $\pi$-periodic function. In this case we have $f(x+\pi)=-f(x)$ and we say that $f$ is an \emph{anti $\pi$-periodic} function.
For this type of functions we can modify the above proofs to obtain a new result that involve a decomposition of the visual angle $\omega$ into $\omega=\omega_{1}+\omega_{2}$ where $\omega_{1}$ and $\omega_{2}$ are defined in the following way.
 Given a point $P\notin K$ we have considered in section \ref{maig13} the directions $0<\alpha<\pi/2<\beta<\pi$  of the support lines of $K$ through $P$ and the visual angle $\omega=\beta-\alpha$. Let us take $\omega_{1}=\pi/2-\alpha$ and $\omega_{2}=\beta-\pi/2$. Then we have
\begin{figure}[h] 
   \centering
   \includegraphics[width=.4\textwidth]{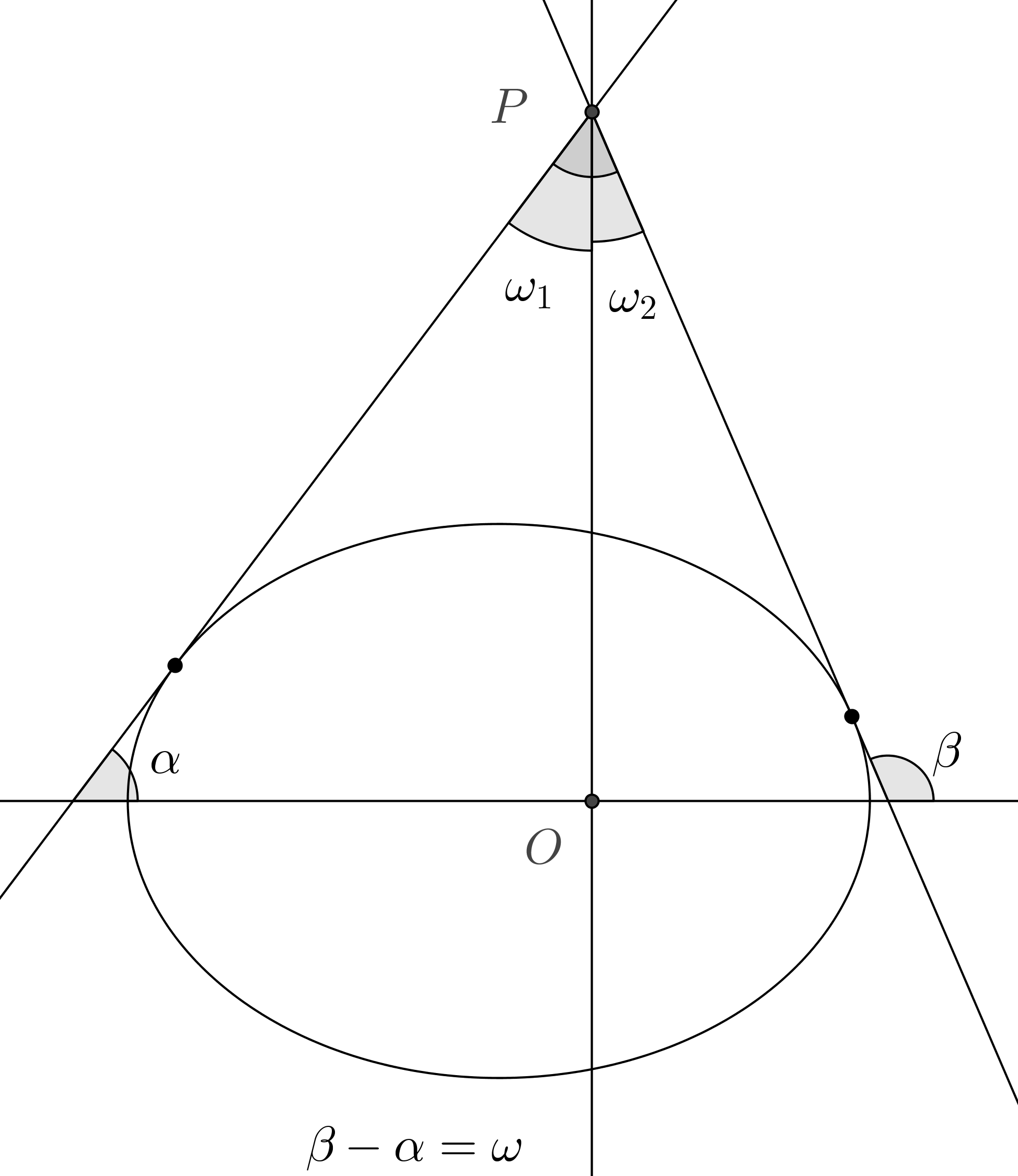}
   \caption{Angles $\omega_{1}$ and $\omega_{2}$.}
   \label{fig:omega12}
\end{figure}
\begin{proposition}\label{antipi} Let $f$ be an anti $\pi$-periodic continuous function on $\R$ such that $f(x)=f(-x)$ and $H$  a ${\mathcal C}^{2}$ function on $[0,\pi]$ with  $H''(x)=f(x)\cdot \sin (x)$, $x\in [0,\pi]$, and $ H(0)=H'(0)=0.$  Then 
\begin{multline}\label{formanti}
\int_{G_{i}\cap K\neq\emptyset}f(\varphi_{2}-\varphi_{1})\,dG_{1}\,dG_{2}\\ 
=2(2H(\pi/2)-H(\pi)) F +2\int_{P\notin K} (2H(\omega_{1})+2H(\omega_{2})-H(\omega))\,dP.
\end{multline}
\end{proposition}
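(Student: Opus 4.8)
The plan is to mimic closely the computation already carried out in the proofs of Proposition \ref{propdreta} and Proposition \ref{prop54}, but to track carefully the sign changes forced by the hypothesis $f(x+\pi)=-f(x)$. The starting point is the change of variables \eqref{5abr1}, which is valid regardless of the periodicity of $f$; it is only in passing from $dG_1\,dG_2$ to the integrand $f(\alpha_2-\alpha_1)|\sin(\alpha_2-\alpha_1)|$ that $\pi$-periodicity was used in \eqref{5abril}. So the first thing I would do is revisit the identity $\varphi_2-\varphi_1=\alpha_2-\alpha_1+\epsilon\pi$ with $\epsilon\in\{0,\pm1\}$, and record that for an anti $\pi$-periodic $f$ one has $f(\varphi_2-\varphi_1)=(-1)^{\epsilon}f(\alpha_2-\alpha_1)=\pm f(\alpha_2-\alpha_1)$. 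The geometric meaning of $\epsilon$ is exactly which of the lines $G_1,G_2$ separates $P$ from the origin, and this is governed by whether $\alpha_i$ lies below or above $\pi/2$ relative to the support directions. This is the source of the splitting point $\pi/2$ and hence of $\omega_1=\pi/2-\alpha$, $\omega_2=\beta-\pi/2$.

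Concretely, for a point $P\notin K$ I would integrate over the square $[\alpha,\beta]^2$ as before, but now split $[\alpha,\beta]$ at the value $\pi/2$, producing subsquares on which $\epsilon$ is constant. On the diagonal blocks where both $\alpha_i$ lie on the same side of $\pi/2$ the angle difference $\alpha_2-\alpha_1$ stays in $(-\pi,\pi)$ with $\epsilon=0$ and $f$ keeps its sign; on the off-diagonal blocks where the two directions straddle $\pi/2$ we pick up $\epsilon=\pm1$ and hence a factor $-1$ from anti $\pi$-periodicity. Using $f(-x)=f(x)$ and the antiderivative $H$ with $H''=f\sin$, $H(0)=H'(0)=0$, each block integrates to a difference of $H$ evaluated at the relevant endpoint angles, exactly as the telescoping $[H(\alpha_2-\alpha)]_\alpha^\beta$ appeared in the proof of Proposition \ref{propdreta}. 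The bookkeeping should collapse to $H$ evaluated at $\omega_1$, $\omega_2$, and $\omega=\omega_1+\omega_2$, and I expect the combination $2H(\omega_1)+2H(\omega_2)-H(\omega)$ to emerge from adding the same-side contributions $2H(\omega_1)+2H(\omega_2)$ and subtracting the straddling contribution, which carries the sign flip and produces $-H(\omega_1+\omega_2)$.

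For the interior points $P\in K$ one has $\alpha=0$, $\beta=\pi$, so $\omega_1=\pi/2$, $\omega_2=\pi/2$ and $\omega=\pi$; the integrand over $K$ evaluates to the constant $2(2H(\pi/2)-H(\pi))$, which upon integration against $dP$ gives the stated $2(2H(\pi/2)-H(\pi))F$ term. Finally I would reassemble the integral over all pairs of lines meeting $K$ as $\bigl(\int_{P\in K}+\int_{P\notin K}\bigr)$ of the per-point value, substitute the two expressions, and recover \eqref{formanti}.

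The main obstacle, and the step I would treat most carefully, is the sign-tracking of $\epsilon$: I need to verify that the off-diagonal blocks of $[\alpha,\beta]^2$ are precisely the pairs of directions whose corresponding lines $G_1,G_2$ separate $P$ from the origin in the opposite sense, so that exactly one factor $\epsilon=\pm1$ occurs there. Establishing that the splitting threshold is $\pi/2$ (which uses the normalization $0<\alpha<\pi/2<\beta<\pi$ of the support directions) and that the antiperiodicity flips the sign on exactly the straddling region is the whole content; once the four telescoping sums are assembled with correct signs, the identity follows by elementary evaluation of $H$ at $\omega_1$, $\omega_2$, $\omega$.
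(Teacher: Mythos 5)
Your strategy is essentially the paper's own: write $\varphi_2-\varphi_1=\alpha_2-\alpha_1+\epsilon\pi$, use anti $\pi$-periodicity to replace $f(\varphi_2-\varphi_1)$ by $(-1)^{\epsilon}f(\alpha_2-\alpha_1)$, split the square $[\alpha,\beta]^2$ at the threshold $\pi/2$ (the direction of the line through $P$ and the origin, so that $\epsilon=0$ on the two same-side blocks and $\epsilon=\pm1$ exactly on the straddling blocks), and telescope each block with the primitive $H$. Your sign bookkeeping for $\epsilon$ is the correct one, and your analysis of the exterior points reproduces the paper's intermediate identity \eqref{pnotinantip}: the per-point value for $P\notin K$ is $2\bigl(2H(\omega_1)+2H(\omega_2)-H(\omega)\bigr)$.

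The gap is the interior case, which you dispatch in one sentence and which is the one place where care was needed. For $P\in K$ one has $\omega_1=\omega_2=\pi/2$ and $\omega=\pi$, and substituting these into your own exterior per-point formula gives $2\bigl(2H(\pi/2)+2H(\pi/2)-H(\pi)\bigr)=2\bigl(4H(\pi/2)-H(\pi)\bigr)$, not the value $2\bigl(2H(\pi/2)-H(\pi)\bigr)$ that you assert; as written, your argument is internally inconsistent. In fact the paper's own proof derives precisely $2\bigl(4H(\pi/2)-H(\pi)\bigr)F$ for the interior contribution (its equation \eqref{pinantip}), so the constant in the printed statement \eqref{formanti} is a misprint, and a faithful execution of your plan would have uncovered it rather than confirmed it. Two independent checks show that $4H(\pi/2)$ is the correct coefficient: in the paper's application to $f(x)=\cos kx$ with $k$ odd, the constant term used is $-2\pi F/(k^2-1)=2\bigl(4H_k(\pi/2)-H_k(\pi)\bigr)F$, whereas $2\bigl(2H_k(\pi/2)-H_k(\pi)\bigr)F=0$; and for $f(x)=\cos x$ with $K$ a disk centered at the origin, Theorem \ref{aagg} gives $\int_{G_i\cap K\neq\emptyset}\cos(\varphi_2-\varphi_1)\,dG_1\,dG_2=\pi^2c_1^2=0$, which only the $4H(\pi/2)$ version of the right-hand side reproduces. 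So: correct approach, correct exterior computation, but the final assembly is wrong as stated — carry out the interior block computation explicitly and you land on $2\bigl(4H(\pi/2)-H(\pi)\bigr)F+2\int_{P\notin K}\bigl(2H(\omega_1)+2H(\omega_2)-H(\omega)\bigr)\,dP$.
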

\begin{proof}
In section \ref{maig7-2} we have seen that $\varphi_{2}-\varphi_{1}=\alpha_{2}-\alpha_{1}+\epsilon\pi$ where $\epsilon= \epsilon(P,\alpha_{1},\alpha_{2})=0,\pm 1$. Then
\begin{multline*}
\int_{G_{i}\cap K\neq\emptyset}f(\varphi_{2}-\varphi_{1})\,dG_{1}\,dG_{2}\\
=\int_{P\in\R^{2}}\int_{[\alpha(P),\beta(P)]^{2}}(-1)^{\epsilon}f(\alpha_{2}-\alpha_{1})|\mathrm{sin}(\alpha_{2}-\alpha_{1})|\,d\alpha_{1}\, d\alpha_{2}\, dP.
\end{multline*} 
If $P\notin K$  we consider the regions
\begin{align*}
\mathcal{R}_{1}&=\{(\alpha_{1},\alpha_{2}): \alpha\leq \alpha_{1}<\alpha_{2}\leq \pi/2\},\\
\mathcal{R}_{2}&=\{(\alpha_{1},\alpha_{2}): \pi/2\leq \alpha_{1}<\alpha_{2}\leq \beta\},\\
\mathcal{R}_{3}&=\{(\alpha_{1},\alpha_{2}): \alpha\leq \alpha_{1}<\pi/2<\alpha_{2}\leq \beta\}.
\end{align*}
In  $\mathcal{R}_{1}$ and $\mathcal{R}_{2}$ we have $\epsilon=1$ and $\epsilon=-1$ in region $\mathcal{R}_{3}$. Therefore, for $P\notin K$
\begin{multline*}
\int_{\alpha}^{\beta}\int_{\alpha}^{\beta}(-1)^{\epsilon}f(\alpha_{2}-\alpha_{1})|\mathrm{sin}(\alpha_{2}-\alpha_{1})|\,d\alpha_{1}\, d\alpha_{2}\\*[5pt]
=2\left(\int_{\mathcal{R}_{1}}f(\alpha_{2}-\alpha_{1})\sin(\alpha_{2}-\alpha_{1})\,d\alpha_{1}\, d\alpha_{2}+
\int_{\mathcal{R}_{2}}f(\alpha_{2}-\alpha_{1})\sin(\alpha_{2}-\alpha_{1})\,d\alpha_{1}\, d\alpha_{2}\right.\\*[5pt]
=\left. -\int_{\mathcal{R}_{3}}f(\alpha_{2}-\alpha_{1})\sin(\alpha_{2}-\alpha_{1})\,d\alpha_{1}\, d\alpha_{2}\right).
\end{multline*}
The integrals over $\mathcal{R}_{1}$ and $\mathcal{R}_{2}$ are easily computed and their values are $2H(\omega_{1})$ and $2H(\omega_{2})$ respectively. Let us compute the third integral.
\begin{multline*}
\int_{\mathcal{R}_{3}}f(\alpha_{2}-\alpha_{1})\sin(\alpha_{2}-\alpha_{1})\,d\alpha_{1}\, d\alpha_{2} \\*[5pt] 
=\int_{\pi/2}^{\beta}\int_{\alpha}^{\pi/2}f(\alpha_{2}-\alpha_{1})\sin(\alpha_{2}-\alpha_{1})\,d\alpha_{1}\, d\alpha_{2})\\*[5pt]
=2\int_{\pi/2}^{{\beta}}\left[-H'(\alpha_{2}-\alpha_{1})\right]_{\alpha_{1}=\alpha}^{{\alpha_{1}=\pi/2}}\,d\alpha_{2}=
2\int_{\pi/2}^{\beta}(H'(\alpha_{2}-\alpha)-H'(\alpha_{2}-\pi/2))\,d\alpha_{2}\\*[5pt]
=2\left[H(\alpha_{2}-\alpha)-H(\alpha_{2}-\pi/2)\right]_{\alpha_{2}=\pi/2}^{\alpha_{2}=\beta}=2\left(H(\omega)-H(\beta-\pi/2)-H(\pi/2-\alpha)\right)\\*[5pt]
=2\left(H(\omega)-H(\omega_{2})-H(\omega_{1})\right).
\end{multline*}
Finally, for $G_{1}\cap G_{2}=P\notin K$ we have
\begin{equation}\label{pnotinantip}
\int_{P\notin K}f(\varphi_{2}-\varphi_{1})\,dG_{1}\,dG_{2}=2\int_{P\notin K}(2H(\omega_{1})+2H(\omega_{2})-H(\omega))\, dP.
\end{equation}
When $P\in K$ we do the same computations but now $\alpha=0$, $ \beta=\pi$ and $\omega=\beta-\alpha=\pi$ and so $\omega_{1}=\pi/2=\omega_{2}$. Thus 
\begin{equation}\label{pinantip}
\int_{G_{1}\cap G_{2}\in K}f(\varphi_{2}-\varphi_{1})\,dG_{1}\,dG_{2}=2(4H(\pi/2)-H(\pi))F.
\end{equation}
Joining \eqref{pnotinantip} and \eqref{pinantip} the Proposition follows. 
\end{proof}

\subsection{Interpretation in terms of densities of the formulas of Crofton, Masotti and powers of sine}\label{21maig-7}
In this section we will give an interpretation of the integrals of the visual angle appearing in the formulas of Crofton, Masotti and power of sine in terms of integrals of densities in the space of pairs of lines. For Hurwitz's formula this was done in Proposition \ref{maig17}. 

\subsubsection*{Crofton's formula}  
Taking $H(x)=x-\sin(x)$ it follows that $f=1$ in Corollary \ref{centredreta} and since $H(\pi)=\pi$ using \eqref{25gg} we get

\begin{proposition}\label{prop45} The following equality holds.
$$
\int_{G_{i}\cap K\neq \emptyset}\,dG_{1}\,dG_{2}=2\pi F+2\int_{P\notin K}(\omega-\sin\omega)\,dP.
$$
\end{proposition}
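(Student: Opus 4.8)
The plan is to recognize the left-hand side as an instance of equation \eqref{25gg} in Corollary \ref{centredreta} for the constant density $f\equiv 1$, so that the entire proof reduces to exhibiting the associated function $H$ and evaluating $H(\pi)$.

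First I would check that $f\equiv 1$ meets the hypotheses required to invoke \eqref{25gg}: the constant function is continuous, $\pi$-periodic, and even, so Corollary \ref{centredreta} applies in its symmetric form. Since $f\equiv 1$, the integrand $f(\varphi_{2}-\varphi_{1})$ on the left-hand side of \eqref{25gg} is identically $1$, and therefore that left-hand side is precisely $\int_{G_{i}\cap K\neq\emptyset}dG_{1}\,dG_{2}$, the quantity we wish to compute.

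Next I would produce $H$ by solving the prescribed initial value problem $H''(x)=f(x)\sin(x)=\sin(x)$ on $[0,\pi]$ with $H(0)=H'(0)=0$. Integrating once and imposing $H'(0)=0$ gives $H'(x)=1-\cos(x)$; integrating again and imposing $H(0)=0$ gives $H(x)=x-\sin(x)$, which is of class ${\mathcal C}^{2}$ as required. In particular $H(\pi)=\pi-\sin(\pi)=\pi$.

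Finally, substituting $H(x)=x-\sin(x)$ and $H(\pi)=\pi$ into \eqref{25gg} yields
$$
\int_{G_{i}\cap K\neq\emptyset}dG_{1}\,dG_{2}=2\pi F+2\int_{P\notin K}(\omega-\sin\omega)\,dP,
$$
which is exactly the asserted formula. I expect no real obstacle in this argument, since it is a direct specialization of Corollary \ref{centredreta}; the only point meriting a word of verification is that the constant function is simultaneously $\pi$-periodic and even, which is what allows the use of the symmetric identity \eqref{25gg} rather than the general form of the Corollary.
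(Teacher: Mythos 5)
Your proposal is correct and follows exactly the paper's own argument: the paper likewise takes $H(x)=x-\sin(x)$ (so that $f\equiv 1$), notes $H(\pi)=\pi$, and applies equation \eqref{25gg} of Corollary \ref{centredreta}. The only cosmetic difference is that you derive $H$ by integrating the initial value problem, whereas the paper simply exhibits it and verifies the conditions.
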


\subsubsection*{Masotti's formula}  
Taking $H(x)=x^{2}-\sin^{2}(x)$ one gets $H''(x)/\sin(x)=4\sin(x)$. So the function $f(x)=4|\mathrm{sin}(x)|$, $x\in \R$, satisfies  the hypothesis of Corollary \ref{centredreta} and equation \eqref{25gg} gives 
\begin{proposition}\label{prop46} The following equality holds

\begin{equation*}
2\int_{G_{i}\cap K\neq\emptyset}|\mathrm{sin}(\varphi_{2}-\varphi_{1})|\,dG_{1}\,dG_{2}=\pi^{2} F +\int_{P\notin K} (\omega^{2}-\sin^{2}\omega)\,dP.
\end{equation*}
\end{proposition}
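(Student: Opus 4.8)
The plan is to apply the identity \eqref{25gg} of Corollary \ref{centredreta} with a suitable choice of the function $H$. Since the integrand on the right-hand side of the asserted formula is $\omega^{2}-\sin^{2}\omega$, I would set $H(x)=x^{2}-\sin^{2}(x)$. The first task is to check that this $H$ produces, through the relation $H''(x)=f(x)\sin(x)$, a density $f$ of the admissible type required by the corollary, namely a continuous function that is both even and $\pi$-periodic.

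Differentiating twice gives $H'(x)=2x-\sin(2x)$ and hence $H''(x)=2-2\cos(2x)=4\sin^{2}(x)$. On the interval $[0,\pi]$, where $\sin x\geq 0$, this can be written as $H''(x)=(4\sin x)\sin x$, so the naive candidate for the density is $f(x)=4\sin x$ on $[0,\pi]$. To extend this to a function that is simultaneously even and $\pi$-periodic on all of $\R$, I would take $f(x)=4|\mathrm{sin}(x)|$. One verifies directly that $f$ is continuous, that $f(-x)=f(x)$, and that $f(x+\pi)=f(x)$, and that on $[0,\pi]$ one indeed has $f(x)\sin x=4|\mathrm{sin}(x)|\sin x=4\sin^{2}x=H''(x)$, as needed. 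The boundary conditions are immediate, since $H(0)=0$ and $H'(0)=0-\sin 0=0$, and $H$ is clearly of class $\mathcal{C}^{2}$.

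With all hypotheses of Corollary \ref{centredreta} in place, I would invoke \eqref{25gg}. Using $H(\pi)=\pi^{2}-\sin^{2}\pi=\pi^{2}$, this yields
$$
\int_{G_{i}\cap K\neq\emptyset} 4|\mathrm{sin}(\varphi_{2}-\varphi_{1})|\,dG_{1}\,dG_{2}=2\pi^{2} F+2\int_{P\notin K}(\omega^{2}-\sin^{2}\omega)\,dP.
$$
Factoring the constant $4$ out of the left-hand integral and dividing the whole equation by $2$ then gives precisely the claimed formula.

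There is essentially no genuine obstacle in this argument, since all the analytic content has been packaged into Corollary \ref{centredreta}; the proof is a bookkeeping matter of selecting $H$ and tracking constants. The one point that requires a moment's care is the passage from the identity $H''(x)=4\sin^{2}x$ to the correct global density $f(x)=4|\mathrm{sin}(x)|$: one must insert the absolute value so that $f$ is genuinely $\pi$-periodic and even, with $f$ matching $H''(x)/\sin x$ only on $[0,\pi]$ rather than expecting $f=4\sin x$ to hold unmodified on all of $\R$.
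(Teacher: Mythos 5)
Your proposal is correct and follows the paper's own proof exactly: the same choice $H(x)=x^{2}-\sin^{2}(x)$, the same density $f(x)=4|\mathrm{sin}(x)|$ obtained from $H''(x)=4\sin^{2}(x)$, and the same application of equation \eqref{25gg} of Corollary \ref{centredreta} followed by tracking the factor of $4$ and dividing by $2$. Your additional remark about inserting the absolute value to make $f$ genuinely $\pi$-periodic and even is precisely the (implicit) point in the paper's one-line argument.
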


\subsubsection*{Powers of sine formula}   
Finally, in an analogous way we can interpretate   the integral of any power of the sine of the visual angle.  Effectively for $H(x)= \sin^m(x)$ it follows that 
 $$
 H''(x)/\sin(x)=m(m-1)\sin^{m-3}(x)-m^{2}\sin^{m-1}(x).
 $$
 So taking  $f(x)=m(m-1)|\mathrm{sin}^{n-3}(x)|-m^{2}|\mathrm{sin}^{m-1}(x)|$ 
 the hypothesis of Corollary~\ref{centredreta} are satisfied and by \eqref{25gg}  we have
\begin{proposition}\label{prop47}The following equality holds
\begin{multline*}
2\int_{P\notin K}\sin^{m}(\omega)\,dP\\*[5pt]
=\int_{G_{i}\cap K\neq\emptyset}\left(m(m-1)|\mathrm{sin}^{m-3}(\varphi_{2}-\varphi_{1})|-m^{2}|\mathrm{sin}^{m-1}(\varphi_{2}-\varphi_{1})|\right)\,dG_{1}\,dG_{2}.
\end{multline*}
\end{proposition}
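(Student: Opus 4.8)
The plan is to obtain this identity as one more instance of the scheme already used for Crofton and Masotti, namely by invoking equation \eqref{25gg} of Corollary \ref{centredreta} for a suitable pair $(f,H)$. The natural choice here is to let $H$ itself be the target integrand, $H(x)=\sin^{m}(x)$, and then read off the associated density $f$ from the defining relation $H''(x)=f(x)\sin(x)$. First I would differentiate twice: writing $H'(x)=m\sin^{m-1}(x)\cos(x)$ and using $\cos^{2}=1-\sin^{2}$ one finds $H''(x)=m(m-1)\sin^{m-2}(x)-m^{2}\sin^{m}(x)$, so that on $[0,\pi]$, where $\sin x\ge 0$, dividing by $\sin x$ yields $f(x)=m(m-1)\sin^{m-3}(x)-m^{2}\sin^{m-1}(x)$. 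This is exactly the bracketed density in the statement once absolute values are inserted.

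Next I would check the boundary data. For $m\ge 2$ we have $H(0)=\sin^{m}(0)=0$ and $H'(0)=m\sin^{m-1}(0)\cos 0=0$, so the normalization $H(0)=H'(0)=0$ required by Corollary \ref{centredreta} holds. Moreover $H(\pi)=\sin^{m}(\pi)=0$, which is the decisive point: the area term $2H(\pi)F$ in \eqref{25gg} vanishes, and what remains is precisely $2\int_{P\notin K}\sin^{m}(\omega)\,dP$ on one side and the integral of $f(\varphi_{2}-\varphi_{1})$ over the pairs of lines meeting $K$ on the other. This is the asserted equality.

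The only genuine work is to verify that the density $f(x)=m(m-1)|\mathrm{sin}^{m-3}(x)|-m^{2}|\mathrm{sin}^{m-1}(x)|$ meets the hypotheses of the even, $\pi$-periodic branch of Corollary \ref{centredreta}. Periodicity and evenness are immediate from $|\sin(x+\pi)|=|\sin(x)|$ and $|\sin(-x)|=|\sin(x)|$; and on $[0,\pi]$ the absolute values are redundant, so $f$ coincides there with $H''/\sin$ and the relation $H''(x)=f(x)\sin x$ indeed holds on the whole interval where the corollary uses it. The point deserving care is regularity: the factor $\sin^{m-3}$ forces the restriction $m\ge 3$ (for $m=3$ it is the constant $1$, and for $m\ge 4$ it vanishes at $0,\pi$), and one should record this explicitly, both because for smaller $m$ the density is singular and because the integral $\int_{P\notin K}\sin^{m}\omega\,dP$ itself ceases to converge (for $m=2$ only the Masotti combination $\omega^{2}-\sin^{2}\omega$ is integrable). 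I expect this verification of the admissible range of $m$, rather than any computation, to be the main obstacle. As a consistency check one can feed $f$ into Theorem \ref{aagg}: expanding the $|\sin|$-powers in a cosine series and comparing with \eqref{eq31} reproduces the closed form \eqref{21maig-5}, which both confirms the density and links the present proposition to the Fourier-coefficient formula.
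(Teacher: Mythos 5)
Your proposal is correct and takes essentially the same approach as the paper: choose $H(x)=\sin^{m}(x)$, read off the density $f(x)=m(m-1)|\mathrm{sin}^{m-3}(x)|-m^{2}|\mathrm{sin}^{m-1}(x)|$ from $H''(x)=f(x)\sin(x)$, and apply \eqref{25gg} of Corollary \ref{centredreta}, the area term dropping out since $H(\pi)=0$. Your explicit remark that $m\geq 3$ is required (for regularity of the density and convergence of $\int_{P\notin K}\sin^{m}\omega\,dP$) is a point the paper leaves implicit.
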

\section{New proofs of classical formulas}\label{seccio4}
Combining the results of the previous section with Theorem \ref{aagg}  new proofs of the formulas of Masotti and the powers of sine can be obtained, in the spirit of the classical proof of Crofton's formula via Integral Geometry

To begin with we note that Theorem \ref{aagg} implies 
the equality  $\int_{G_{i}\cap K\neq \emptyset}\,dG_{1}\,dG_{2}=L^{2}$ which is also
an immediate consequence of the well known  Cauchy--Crofton's formula (see \cite{santalo}). Now this equality together   
with  Proposition \ref{prop45} gives Crofton's formula  
\begin{equation}\label{crofton18}
L^{2}=2\pi F+2\int_{P\notin K}(\omega-\sin\omega)\,dP.
\end{equation}
  
\subsubsection*{Masotti's formula}  
A simple calculation shows that  the Fourier expansion 
of the function $|\mathrm{sin}(t)|$ is
\begin{equation}\label{8abril}
|\mathrm{sin}(t)|=\frac{2}{\pi}+\frac{4}{\pi}\sum_{n\geq 1}\frac{\cos(2nt)}{1-4n^{2}}.
\end{equation}
So by Theorem \ref{aagg},
$$
\int_{G_{i}\cap K\neq\emptyset} |\mathrm{sin}(\varphi_{2}-\varphi_{1})|\,dG_{1}\,dG_{2}=\frac{2L^{2}}{\pi}+4\pi\sum_{n\geq 1}\frac{c_{2n}^{2}}{1-4n^{2}},
$$
and using Proposition \ref{prop46}
one gets 
\begin{equation*}\label{20jb}
\int_{P\notin K}(\omega^{2}-\sin^{2}\omega)\,dP=-\pi^{2}F+\frac{4L^{2}}{\pi}+8\pi\sum_{n\geq 1}\frac{c_{2n}^{2}}{1-4n^{2}},
\end{equation*}
which is Masotti's formula \eqref{21maig-4}.  

\subsubsection*{Another example}  
In the preceding sections we have interpreted integral formulas of some functions of the visual angle in terms of densities in the space of pairs of lines.
But one can also proceed in the reverse sense, that is to start from a density and to look for the corresponding  function of the visual angle.

For instance the proof of Masotti's formula leads to compute  
$\int_{G_{i}\cap K}|\mathrm{sin}(\varphi_{2}-\varphi_{1})|\,dG_{1}dG_{2}$.
If we consider now the density function $|\mathrm{cos}(\varphi_{2}-\varphi_{1})|$, using Theorem~\ref{aagg} and that 
\begin{equation*}
|\mathrm{cos}(t)|=\frac{2}{\pi}+\frac{4}{\pi}\sum_{n\geq 1}\frac{(-1)^{n}\cos(2nt)}{1-4n^{2}}
\end{equation*}
 we get
$$
\int_{G_{i}\cap K\neq\emptyset} |\mathrm{cos}(\varphi_{2}-\varphi_{1})|\,dG_{1}\,dG_{2}=\frac{2L^{2}}{\pi}+4\pi\sum_{n\geq 1}\frac{(-1)^{n}c_{2n}^{2}}{1-4n^{2}}.
$$
The function $H$ appearing in  Corollary \ref{centredreta}  
is in this case
$$
H(\omega)=
\begin{cases}
\frac{1}{4}(\omega-\sin\omega\cos\omega)& 0\leq\omega\leq \pi/2,\\[.3cm]
\frac{1}{4}(3\omega-\pi+\sin\omega\cos\omega)&\pi/2\leq \omega\leq \pi.
\end{cases}
$$
Hence, by \eqref{25gg} we have
$$
\int_{G_{i}\cap K\neq\emptyset} |\mathrm{cos}(\varphi_{2}-\varphi_{1})|\,dG_{1}\,dG_{2}=\pi F+ 2\int_{P\notin K}H(\omega)\,dP.
$$

\subsubsection*{Powers of sine formula}  
In order to apply Theorem \ref{aagg} to the right-hand side of the equality in  Proposition~\ref{prop47} we need to compute 
the Fourier coefficients of the function $f(x)=m(m-1)|\mathrm{sin}^{m-3}(x)|-m^{2}|\mathrm{sin}^{m-1}(x)|$. 
It is clear that $A_{k}=0$ for $k$ odd. For $k$ even  we have
\begin{multline}\label{maig}
A_{k}=\frac{1}{\pi}\int_{0}^{2\pi}f(x)\cos(kx)\,dx\\*[5pt]
=\frac{1}{\pi}\left[2m(m-1)\int_{0}^{\pi}\sin^{m-3}x\cos(kx)\,dx
-2m^{2}\int_{0}^{\pi}\sin^{m-1}x\cos(kx)\,dx\right]\\*[5pt]
=\frac{1}{\pi}[2m(m-1)I_{m-3,k}-2m^{2}I_{m-1,k}],
\end{multline}
where
$$
I_{m,k}=\int_{0}^{\pi}\sin^{m}(x)\cos(kx)\,dx=(-1)^{k/2}\frac{2^{-m}m!\pi}{\Gamma(1+\frac{m-k}{2})\Gamma(1+\frac{m+k}{2})},
$$
(see, for instance, \cite{grads}, p. 372).
Substituting this expression in \eqref{maig} it follows
$$
A_{k}=\frac{m!}{2^{m-2}(m-2)}\frac{(-1)^{\frac{k}{2}+1}(k^{2}-1)}{\Gamma(\frac{m+1+k}{2})\Gamma(\frac{m+1-k}{2})}.
$$ 
Finally using Theorem \ref{aagg} we get
\begin{multline*}\label{21maig-5}
\int_{P\notin K}\sin^{m}(\omega)\,dP=\frac{m!}{2^{m}(m-2)\Gamma(\frac{m-1}{2})^{2}}\, L^{2} \\ 
+\frac{m!\pi^{2}}{2^{m-1}(m-2)}\sum_{k\geq 2, even}\frac{(-1)^{\frac{k}{2}+1}(k^{2}-1)}{\Gamma(\frac{m+1+k}{2})\Gamma(\frac{m+1-k}{2})}c_{k}^{2}.
\end{multline*}
Note that for $m$ odd the index $k$ in the sum runs only from $2$ to $m-1$.

This formula, which was first obtained by a different method in \cite{CGR}, provides an interpretation of the coefficients of $c_{k}^{2}$ as the Fourier coefficients of the above function $f$.

\subsubsection*{Crofton-Hurwitz's integral} \label{hurwitzsec}
In the above two previous sections
we have strongly used 
equality \eqref{25gg} of Corollary~\ref{centredreta} that depends  
on the fact that the function $f(x)$ is $\pi$-periodic,  a fact that is crucial  in order that equality \eqref{5abril} holds.

Consider now the function  $f(x)=\cos kx$ with $k>1$.  This function satisfies the hypothesis of Corollary~\ref{centredreta} for $k$ even and the hypothesis of Proposition~\ref{antipi} for $k$~odd. We have that
\begin{equation}\label{Hhurwitzk}
H_k(x)=\frac{1}{2(k^{2}-1)}\left(f_{k}(x)+2(\sin x-x)\right),
\end{equation}
with $f_{k}(x)$ the Hurwitz's function given in \eqref{maig9-3}, satisfies the equation $H_k''(x)=\cos kx\cdot \sin x,$ $x\in [0,\pi],$ and $H_k(0)=H_k'(0)=0.$ 
Therefore, for $k$ even, equalities~\eqref{eq31} and~\eqref{25gg} give
$$
\pi^{2}c_{k}^{2}= \int_{G_{i}\cap K\neq\emptyset}\cos(k(\varphi_{2}-\varphi_{1}))\,dG_{1}\,dG_{2} =-\frac{\pi F}{k^{2}-1}+2\int_{P\notin K}H_{k}(\omega)\,dP,
$$
and using Crofton's formula \eqref{crofton18} one gets a new proof of Hurwitz's formula \eqref{maig9-2} for $k$ even. 

When $k$ is odd equation  $\eqref{formanti}$ gives
\begin{multline*}
\int_{G_{i}\cap K\neq\emptyset}\cos k(\varphi_{2}-\varphi_{1})\,dG_{1}\,dG_{2}=\\ =-{2\pi F\over k^{2}-1}+
2\int_{P\notin K} (2H_k(\omega_{1})+2H_k(\omega_{2})-H_k(\omega))\ dP.
\end{multline*}
Using the equality  \eqref{eq31} one deduces that
$$
\int_{P\notin K}H_k(\omega)\,dP=-{\pi^{2}c_{k}^{2}\over 2}-{2\pi F\over k^{2}-1}+\int_{P\notin K} (H_k(\omega_{1})+H_k(\omega_{2})\,dP.
$$
Now by \eqref{Hhurwitzk} and  Crofton's formula  we obtain
\begin{equation}\label{hwtzsenar}
\int_{P\notin K}f_{k}(\omega)\,dP=L^{2}-\pi^{2}(k^{2}-1)c_{k}^{2}-2\pi F+2(k^{2}-1)\int_{P\notin K} (H_k(\omega_{1})+H_k(\omega_{2}))\,dP.
\end{equation}
Since we do not know the value of $\int_{P\notin K} (H_k(\omega_{1})+H_k(\omega_{2}))\,dP$ we are not able to prove Hurwitz formula in the case of $k$ odd.  But from \eqref{maig9-2}   we get the following result. 

\begin{proposition}\label{21maig-6} Let $K$ be  a compact convex set of area $F$.  Then
\begin{equation}\label{maig13-2}
(k^{2}-1)\int_{P\notin K} (H_k(\omega_{1})+H_k(\omega_{2})\,dP=\pi F
\end{equation}
 for each $k\geq 3$ odd, where $H_{k}$ is given in \eqref{Hhurwitzk}.
\end{proposition}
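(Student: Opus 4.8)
The plan is to combine the identity \eqref{hwtzsenar}, already derived above for odd $k$ by means of the angle decomposition in Proposition \ref{antipi}, with the classical Hurwitz formula \eqref{maig9-2}, and then solve the resulting equation for the unknown quantity $\int_{P\notin K}(H_k(\omega_{1})+H_k(\omega_{2}))\,dP$. The point is that \eqref{hwtzsenar} and \eqref{maig9-2} both compute $\int_{P\notin K}f_{k}(\omega)\,dP$, so equating them isolates the integral we want.

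First I would specialize \eqref{maig9-2} to odd $k$: since $(-1)^{k}=-1$, it reads $\int_{P\notin K}f_{k}(\omega)\,dP=L^{2}-\pi^{2}(k^{2}-1)c_{k}^{2}$. On the other hand, \eqref{hwtzsenar} expresses the same integral as $L^{2}-\pi^{2}(k^{2}-1)c_{k}^{2}-2\pi F+2(k^{2}-1)\int_{P\notin K}(H_k(\omega_{1})+H_k(\omega_{2}))\,dP$. Setting the two right-hand sides equal, the terms $L^{2}$ and $-\pi^{2}(k^{2}-1)c_{k}^{2}$ occur on both sides and cancel, leaving $0=-2\pi F+2(k^{2}-1)\int_{P\notin K}(H_k(\omega_{1})+H_k(\omega_{2}))\,dP$. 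Dividing by $2$ and rearranging yields precisely \eqref{maig13-2}.

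The genuine work lies entirely upstream: in establishing \eqref{hwtzsenar}, which rests on Proposition \ref{antipi} (the treatment of the anti $\pi$-periodic density $\cos(kx)$ via the splitting $\omega=\omega_{1}+\omega_{2}$), together with \eqref{eq31} and Crofton's formula \eqref{crofton18}. Once those are in hand, the present statement follows purely by comparison and cancellation, so there is no remaining analytic obstacle. The only step requiring any care is the correct specialization of the sign $(-1)^{k}$ to $-1$ for odd $k$: it is exactly this that makes the $c_{k}^{2}$ contributions in the two formulas coincide and cancel, rather than reinforce. Were $k$ even, the same comparison would instead reproduce Hurwitz's formula, as already noted in the preceding paragraph, which is why the identity \eqref{maig13-2} is a phenomenon special to the odd case.
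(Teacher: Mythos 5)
Your proposal is correct and is exactly the paper's own argument: the paper derives \eqref{hwtzsenar} via Proposition \ref{antipi}, \eqref{eq31} and Crofton's formula, and then obtains Proposition \ref{21maig-6} precisely by comparing \eqref{hwtzsenar} with Hurwitz's formula \eqref{maig9-2} for odd $k$, so that the $L^{2}$ and $\pi^{2}(k^{2}-1)c_{k}^{2}$ terms cancel and the equation can be solved for $\int_{P\notin K}(H_k(\omega_{1})+H_k(\omega_{2}))\,dP$. Your remark about the sign $(-1)^{k}=-1$ being what makes the cancellation work is also the right observation.
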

Notice that the above equation is equivalent to  
\begin{equation}\label{maig15}
\int_{P\notin K} \left(f_{k}(\omega_{1})+2(\sin \omega_{1}-\omega_{1})+ f_{k}(\omega_{2})+2(\sin \omega_{2}-\omega_{2})\right)\,dP = 2\pi F.
\end{equation}

The function $H_{k}$ is the sum, except for a constant, of Hurwitz's function and Crofton's function and so are the terms in the above integrand.  
The integral of the sum of Crofton's
and Hurwitz's functions of the visual angle is  
$$
\int_{P\notin K}(f_{k}(\omega)+2(\sin \omega-\omega))\,dP= 2\pi F+ (-1)^{k}\pi^{2}(k^{2}-1)c_{k}^{2}, \quad k\geq 2.
$$
The surprising fact is that, for $k$ odd,  decomposing the visual angle $\omega$
into the two parts $\omega=\omega_{1}+\omega_{2}$ 
and adding the corresponding integrals 
one gets \eqref{maig15}
in which the right-hand side does not depend on $k$.
\medskip	

In concluding we make the following remark. Theorem \ref{aagg} states that the integral  $\int_{G_{i}\cap K\neq\emptyset}f(\varphi_{2}-\varphi_{1})\,dG_{1}\,dG_{2}$ depends only on the integrals $\int_{G_{i}\cap K\neq\emptyset}\cos k(\varphi_{2}-\varphi_{1})\,dG_{1}\,dG_{2}$. So, by the results of section \ref{maig7-2} we are lead to calculate the functions~$H_{k}(x)$ such that $H_{k}''(x)=\cos(kx)\sin(x)$ with $H_{k}(0)=H_{k}'(0)=0$. These functions appear to be the sum of the functions of Hurtwitz and Crofton given in~\eqref{Hhurwitzk}, that is 
$$
H_k(x)=\frac{1}{2(k^{2}-1)}\left(f_{k}(x)+2(\sin x-x)\right),\quad k\geq 2,
$$
and
$H_{1}(x)=(1/8)(2x-\sin(2x)).$ 

As a consequence  when $f$ is a $\pi$-periodic density,  according to Corollary \ref{centredreta}, 
the integral $\int_{G_{i}\cap K\neq\emptyset}f(\varphi_{2}-\varphi_{1})\,dG_{1}\,dG_{2}$ is a linear combination of integrals extended outside $K$
of the functions of the visual angle $H_{k}(\omega)$.
Likewise when the density~$f$ is anti $\pi$-periodic, according to Proposition 
 \ref{antipi}, the corresponding integral of the density 
is
a linear combination of integrals extended outside $K$
of the functions~$H_{k}(\omega)$, $H_{k}(\omega_{1})$ and $H_{k}(\omega_{2})$. 
\medskip	

Summarizing, it appears that the functions of Crofton and Hurwitz are some  kind of basis for the integral of any $\pi$-periodic or anti $\pi$-periodic density 
with respect to the measure $dG_{1}\,dG_{2}$ over the set of pairs of lines 
meeting a given compact convex set. 
\bibliographystyle{plain}
\bibliography{CGRdP}

\end{document}